\documentclass[10pt]{amsart}
\usepackage{amssymb}
\usepackage{hyperref}

\setlength{\marginparwidth}{.8in}
\setlength{\textheight}{22cm} \setlength{\oddsidemargin}{0.35in}
\setlength{\evensidemargin}{0.35in} \setlength{\textwidth}{16cm}
\setlength{\topmargin}{0.18in} \setlength{\headheight}{0.18in}
\setlength{\marginparwidth}{1.0in}

\newtheorem{theo}{Theorem} % Pour les theoremes principaux
 % Pour les propositions de l'intro
%\newtheorem{exttheo}{Theorem} % Pour les theoremes exterieurs 
%\newtheorem{extprop}[exttheo]{Proposition}
%\renewcommand{\theexttheo}{\Alph{exttheo}}  
\newtheorem{lemma}{Lemma}[section]
\newtheorem{prop}[lemma]{Proposition}
\newtheorem{corol}[lemma]{Corollary}

\newtheorem{reftheo}[lemma]{Theorem} % Pour les theoremes principaux

\theoremstyle{remark}
\newtheorem{remark}[lemma]{\bf{Remark}}

\theoremstyle{definition}

\newcommand{\CC}{\mathbb{C}}
\newcommand{\NN}{\mathbb{N}}
\newcommand{\RR}{\mathbb{R}}

\newcommand{\eps}{\varepsilon}

%\newcommand{\indic}{\hbox{1\kern-.27em I}}

%tildes

%conjugu\'es

\newcommand{\ubar}{\overline{u}}

\newcommand{\brx}{\langle x \rangle}

%parties r\'eelles et imaginaires
\DeclareMathOperator{\re}{Re}
\DeclareMathOperator{\im}{Im}

%ensembles

%espaces de fonctions

\newcommand{\loc}{\rm loc}
\newcommand{\hdot}{\dot{H}^1}

%normes

%support
\DeclareMathOperator{\supp}{supp}

%Convergence

\DeclareMathOperator{\divergence}{div}

\def\d{{\partial}}

%Styles

%\numberwithin{equation}{section} %pour numeroter les equations par section
\title[Strichartz estimates on manifolds]{Weighted Strichartz estimates 
for radial Schr\"odinger equation on noncompact manifolds}
%Improved Strichartz estimates 
\author[V.~Banica]{}
\email{Valeria.Banica@univ-evry.fr}
\address{Valeria Banica\\
Universit\'e d'Evry Val d'Essonne\\
D\'epartement de Math\'ematiques\\
Bd. F. Mitterrand, 91025 Evry Cedex\\ 
France}
\author[T.~Duyckaerts]{}
\email{thomas.duyckaerts@u-cergy.fr}
\address{Thomas Duyckaerts\\
Universit{\'e} de Cergy-Pontoise\\
D\'epartement de Math\'ematiques\\ 
Site de Saint Martin, 2 avenue Adolphe-Chauvin\\ 
95302 Cergy-Pontoise cedex, Frances}

\begin{document}

\subjclass[2000]{}
\maketitle
\begin{center}{{\bf{V. Banica$^1$, T. Duyckaerts$^{2}$}}  \vspace{3mm}\\\tiny{$^1$D\'epartement de Math\'ematiques, Universit\'e d'Evry, France\\
$^2$ D\'epartement de Math\'ematiques, UMR CNRS 8088, Universit\'e de Cergy-Pontoise, France}}
\end{center}
\begin{abstract}
We prove global weighted Strichartz estimates for radial solutions of linear Schr\"odinger equation on a class of rotationally symmetric noncompact manifolds, generalizing the known results on hyperbolic and Damek-Ricci spaces. This yields classical Strichartz estimates with a larger class of exponents than in the Euclidian case and improvements for the scattering theory.  The manifolds, whose volume element grows polynomially or exponentially at infinity, are characterized essentially by negativity conditions on the curvature. In particular the rich algebraic structure of hyperbolic and Damek-Ricci spaces is not the cause of the improved dispersive properties of the equation. The proofs are based on known dispersive results for the equation with potential on the Euclidean space, and on a new one, valid for $C^1$ potentials decaying like $1/r^2$ at infinity. 
\end{abstract}

\tableofcontents

\section{Introduction}
Let us consider the linear Schr\"odinger equation on a $n$-dimensional Riemannian manifold $(M,g)$
\begin{equation}
  \label{eq:ls}
\left\{\begin{array}{c}  
i\d_t u + \Delta_{M} u=f,\\
u(0)=u_0\in L^2(M).
\end{array}\right.
\end{equation}
where $\Delta_M$ is the associated Laplace-Beltrami operator. In the Euclidian case $(M,g)=(\RR^n,\delta)$, the solutions of \eqref{eq:ls} satisfy the Strichartz Estimates (see \cite{St77a,GiVe85,Ya87,CaWe88,KeTa98}):
\begin{equation}
\label{StrichartzIntro}
\|u\|_{L^{p_1}(\RR,L^{q_1}(M))}\leq C\left(\left\|u_0\right\|_{L^2(M)}+\|f\|_{L^{p_2}(\RR,L^{q_2}(M))}\right),
\end{equation}
where $(p_1,q_1)$ and $(p_2,q_2)$ are any n-admissible couples,
\begin{equation}
\label{admissible}
\frac{2}{p_j}+\frac{n}{q_j}=\frac{n}{2},\quad p_j\geq 2, \quad (p_j,q_j,n)\neq(2,\infty,2).
\end{equation}
The validity of \eqref{StrichartzIntro}, or weaker related estimates, on other manifolds that Euclidean space has been intensively studied the last twenty years (\cite{Bo93}, \cite{BuGeTz04AJ},\cite{BuGeTz02} \cite{StTa02}, \cite{HaTaWu04}, \cite{RoZu05BO}, \cite{An06}, \cite{BlSmSo06P} \cite{BoTz06P}, \cite{BoPr}, etc). To our knowledge, the only cases in which an improvement to \eqref{StrichartzIntro} is known are  hyperbolic space and the much larger class of Damek-Ricci spaces. For such manifolds, radial solutions of \eqref{eq:ls} satisfy Strichartz estimates with a weight in space related to the growth of the volume density, which constitutes a gain at infinity (\cite{Ba05}, \cite{Pi05}, \cite{BaCaSt06}). Moreover, global Strichartz estimates hold for a larger class of Lebesgue exponents, which implies, as shown in \cite{BaCaSt06}, an improvement for the scattering theory of the nonlinear equation. This behavior is expected to hold even in the nonradial case (\cite{AnPiPr}). \par
The Damek-Ricci spaces are examples of noncompact harmonic spaces which are not necessarily symmetric, yielding counterexamples in the noncompact case to Lichnerowicz's conjecture (\cite{DaRi92}). By harmonic we mean that the volume density is radial at any point. These spaces have been constructed algebraically from generalized Heisenberg groups. They have nonpositive sectional curvature valued in $[-1,0]$, and negative constant Ricci curvature (see \cite{BeTrVa95BO}).
The improvement of the dispersive properties of the Schr\"odinger equation on these spaces is usually explained by the negative curvature and the exponential growth of the volume element. However they are Lie Groups, with a large group of isometries, and one might think for example that this rich algebraic structure is also necessary to get the improved dipersive properties. \par
The aim of this paper is to give other examples of noncompact manifolds for which there are gains as in the Damek-Ricci spaces.\par
There are examples of manifolds having only some of the properties of the Damek-Ricci spaces, and that do not present major dispersive improvements in the radial case. The Euclidean space $\mathbb{R}^n$ is harmonic with a rich algebraic structure and with zero curvature. In the radial setting, there are some improvements, but not very strong, in the sense that they only yield the 2-d endpoint Strichartz estimate (\cite{Ta00},\cite{Vi01}, see also Remark \ref{euclidradial}). On Heisenberg groups, noncompact Lie groups which are not harmonic and whose sectional curvature takes positive and negative values, the local Strichartz inequalities do not hold (\cite{BaGeXu00}).

In the compact setting, the Strichartz estimates may also be related to the sign of the curvature of the manifold. On the flat torus local Strichartz estimates hold with an arbitrary small loss \cite{Bo93}. On the spheres, harmonic manifolds with positive curvature, the local Strichartz inequalities hold only with important loss of derivatives, and the result is sharp (\cite{BuGeTz02}). Note that in these examples, it is meaningless to look for a gain at infinity, and that the lack of global in time estimates is an immediate consequence of the compactness of the manifold. However the fact that the estimates are better in the case of the torus may be related to the strict positivity of the curvature of the sphere. It remains to our knowledge an open question in which way local Strichartz estimates hold on compact manifolds with constant negative curvature. Such a manifold is obtained as quotient of the hyperbolic space by a discrete co-compact subgroup of its isometry group.

In the noncompact case, it seems reasonable to think that negativity conditions on the curvature are sufficient to get the improved global Strichartz estimates. In this work we show that this holds for rotationally symmetric manifolds in the radial case under the additional assumption that the volume density grows polynomially at infinity. We get similar results for manifolds whose volume density grows exponentially at infinity. This also yields improvements for the nonlinear scattering theory, in the spirit of \cite{BaCaSt06}. Concerning the local in time estimates, boundeness conditions on the sectional curvature are sufficient for obtaining weighted estimates. These results show in particular that the algebraic structure is not necessary to get the weighted Strichartz estimates.

We will call $n-$dimensional (noncompact) \textbf{rotationally symmetric manifold} a manifold $M$ given by the metric
$$ds^2=dr^2+\phi^2(r)\,d\omega^2,$$
where $d\omega^2$ is the metric on the sphere $\mathbb{S}^{n-1}$, and  
$\phi$ is a $\mathcal{C}^\infty$ nonnegative function on $[0,\infty)$, strictly positive on $(0,\infty)$, such that $\phi^{(even)}(0)=0$ and $\phi'(0)=1$.
These conditions on $\phi$ ensures us that the manifold is smooth (\S 1.3.4. of \cite{Pe98BO}). The volume element is $\phi^{n-1}(r)$, and the Laplace-Beltrami operator on $M$ is
\begin{equation}\label{Laplace}
\Delta_M=\partial^2_r+(n-1)\frac{\phi'(r)}{\phi(r)}\partial_r+\frac{1}{\phi^2(r)}\Delta_{\mathbb{S}^{n-1}},
\end{equation}
Let us notice that under appropriate conditions on $\phi$, restricting ourselves to radial functions, the operator \eqref{Laplace} may also be viewed as the Laplace-Beltrami operator on a rotationally symmetric manifold of other dimension than $n$ (see Remark \ref{euclidradial}).

For such manifold, the curvature of $M$, can be computed explicitely in terms of $\phi$ (see \S 3.2.3 of \cite{Pe98BO}). Indeed, there exists an orthonormal frame $(F_j)_{j=1\ldots n}$ on $(M,g)$, where $F_n$ corresponds to the radial coordinate, and $F_1,\ldots,F_{n-1}$ to the spherical coordinates, for which $F_i\land F_j$ diagonalize the curvature operator $\mathcal{R}$ : 
$$\mathcal{R}(F_i\land F_n)=-\frac{\phi''}{\phi}F_i\land F_n\,\,\,,\,\,\, i<n,$$
$$\mathcal{R}(F_i\land F_j)=-\frac{(\phi')^2-1}{\phi^2}\,F_i\land F_j\,\,\,,\,\,\, i,j<n.$$
The Ricci curvature is then given by 
$$Ric(F_i)=-\left((n-2)\frac{(\phi')^2-1}{\phi^2}+\frac{\phi''}{\phi}\right)F_i\,\,\,,\,\,\, i<n\quad,\qquad Ric(F_n)=-(n-1)\frac{\phi''}{\phi}F_n,$$
and the scalar curvature is
$$scal=-2(n-1)\frac{\phi''}{\phi}-(n-1)(n-2)\frac{(\phi')^2-1}{\phi^2}.$$
We will focus on the sectional curvature $sec_r$, which is a normalized quadratic form on the tangent space $T_rM$, and takes in our case the following extremal values $sec_r^{rad}$ and $sec_r^{tan}$
\begin{equation}
\label{sec}
sec_r^{rad}=-\frac{\phi''}{\phi}\quad,\quad sec_r^{tan}=-\frac{(\phi')^2-1}{\phi^2}.
\end{equation}

We start with a simple result concerning the local Strichartz estimates with gain in space. 
\begin{prop}\label{prop.local}
Let $M$ be a rotationally symmetric manifolds of dimension $n\geq 3$ such that 
\begin{equation}\label{secbounded}
\exists\, m>0,\quad \frac{1}{\phi(r)}+\left|sec^{rad}_r\right|\leq m\quad\forall\,r\in[1,\infty)
\end{equation} 
Then for all $T>0$, there exists a constant $C$ such that for all radial solutions $u$, $f$ of \eqref{eq:ls}
\begin{equation}\label{wlocstrichartz}
\left\|u\,\left(\frac{\phi(r)}{r}\right)^{\frac{n-1}{2}\left(1-\frac{2}{q_1}\right)}\right\|_{L^{p_1}((0,T),L^{q_1}(M))}\leq C\left\|u_0\right\|_{L^2(M)}+C\left\|f\left(\frac{r}{\phi(r)}\right)^{\frac{n-1}{2}\left(1-\frac{2}{q_2}\right)}\right\|_{L^{p_2'}((0,T),L^{q_2'}(M))},
\end{equation}
where $(p_1,q_1)$ and $(p_2,q_2)$ are any $n-$admissible couples. Notice that if the volume density grows faster than in the Euclidean case, then $\frac{\phi}{r}$ is a gain in space. 
\end{prop}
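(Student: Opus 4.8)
The plan is to conjugate \eqref{eq:ls} into a Schr\"odinger equation on $\RR^n$ with a bounded potential, and then to invoke the (classical) local‑in‑time Strichartz estimates for such equations. Since $\Delta_M$ commutes with rotations, radial data produce radial solutions, so by \eqref{Laplace} we may treat \eqref{eq:ls} as the one‑dimensional problem $i\d_t u+\d_r^2 u+(n-1)\frac{\phi'}{\phi}\d_r u=f$ on $(0,\infty)$. I would set
\[
\tilde u:=\Big(\frac{\phi(r)}{r}\Big)^{\frac{n-1}{2}}u,\qquad \tilde g:=\Big(\frac{\phi(r)}{r}\Big)^{\frac{n-1}{2}}f,
\]
and check, by conjugating first by $\phi^{(n-1)/2}$ to remove the first‑order term and then dividing by $r^{(n-1)/2}$ to recover the radial Euclidean Laplacian, that $\tilde u$ solves
\[
i\d_t\tilde u+\Delta_{\RR^n}\tilde u-W(r)\,\tilde u=\tilde g,\qquad \tilde u(0)=\Big(\frac{\phi}{r}\Big)^{\frac{n-1}{2}}u_0,
\]
as a radial function on $\RR^n$, with potential $W(r)=\frac{(n-1)(n-3)}{4}\big((\phi'/\phi)^2-1/r^2\big)+\frac{n-1}{2}\,\phi''/\phi$. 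The conditions $\phi^{(even)}(0)=0$, $\phi'(0)=1$ make $\phi(r)/r$ smooth, even and strictly positive near $0$, so $\tilde u$ has the regularity and parity of a genuine radial function on $\RR^n$, and $W$ is smooth at the origin (the apparent $1/r^2$ singularity cancels). Comparing the volume densities $\phi^{n-1}$ and $r^{n-1}$ one obtains, for every exponent $q$,
\[
\Big\|u\,(\phi/r)^{\frac{n-1}{2}(1-\frac2q)}\Big\|_{L^q(M)}=\big\|(\phi/r)^{\frac{n-1}{2}}u\big\|_{L^q(\RR^n)},
\]
in particular (take $q=2$) $u\mapsto(\phi/r)^{(n-1)/2}u$ is an isometry from radial $L^2(M)$ to radial $L^2(\RR^n)$, and an analogous computation gives $\big\|f\,(r/\phi)^{\frac{n-1}{2}(1-\frac2{q_2})}\big\|_{L^{q_2'}(M)}=\|\tilde g\|_{L^{q_2'}(\RR^n)}$. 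Hence \eqref{wlocstrichartz} is exactly the standard unweighted local Strichartz estimate for $\tilde u$ on $\RR^n$.

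Next I would prove $W\in L^\infty(\RR^n)$. On $[0,1]$ this is clear since $W$ is continuous there; on $[1,\infty)$ it follows from \eqref{secbounded}. Indeed $\frac{n-1}{2}\phi''/\phi=-\frac{n-1}{2}sec^{rad}_r$ is bounded by hypothesis, and writing $(\phi'/\phi)^2=1/\phi^2-sec^{tan}_r$ (with $1/\phi\le m$) it suffices to bound $w:=\phi'/\phi$ on $[1,\infty)$. This is the one non‑routine point, which I would handle by a Riccati comparison: $w$ satisfies $w'=\phi''/\phi-w^2$, hence $-m-w^2\le w'\le m-w^2$ on $[1,\infty)$. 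If $w(r_0)>\sqrt m$ then $w'<0$ at $r_0$ and $w$ cannot increase through the level $\sqrt m$, so $w\le\max(w(1),\sqrt m)$ on $[1,\infty)$. If $w(r_0)<-\sqrt{2m}$ for some $r_0\ge1$, then $w'\le m-w^2\le-\frac12 w^2$ as long as $w<-\sqrt{2m}$, so comparison with $y'=-\frac12 y^2$ forces $w\to-\infty$ at some finite radius, i.e.\ $\phi\to0$ there, contradicting $\phi>0$ on $(0,\infty)$; thus $w\ge-\sqrt{2m}$ on $[1,\infty)$. Therefore $|\phi'/\phi|$ is bounded and $W\in L^\infty$.

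Finally I would establish the local Strichartz estimate for the perturbed Schr\"odinger equation on $\RR^n$ with $W\in L^\infty$, which is classical. By Duhamel,
\[
\tilde u(t)=e^{it\Delta}\tilde u(0)-i\int_0^t e^{i(t-s)\Delta}\big(W\tilde u+\tilde g\big)(s)\,ds,
\]
so on a time interval $I$ the free estimates of \cite{KeTa98}, used with the output pairs $(p_1,q_1)$ and the energy pair $(\infty,2)$ and with the dual input pairs $(p_2',q_2')$ and $(1,2)$, give for all $n$-admissible $(p_1,q_1),(p_2,q_2)$
\[
\|\tilde u\|_{L^{p_1}_I L^{q_1}}+\|\tilde u\|_{L^\infty_I L^2}\le C\|\tilde u(0)\|_{L^2}+C\,|I|\,\|W\|_{L^\infty}\|\tilde u\|_{L^\infty_I L^2}+C\|\tilde g\|_{L^{p_2'}_I L^{q_2'}},
\]
using $\|W\tilde u\|_{L^1_I L^2}\le|I|\,\|W\|_{L^\infty}\|\tilde u\|_{L^\infty_I L^2}$. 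Choosing $|I|$ so that $C|I|\|W\|_{L^\infty}\le\frac12$ absorbs the middle term, and splitting $(0,T)$ into finitely many such intervals (the $L^\infty_I L^2$ bound controls the data at each restart) then iterating yields $\|\tilde u\|_{L^{p_1}((0,T),L^{q_1})}\le C_T\big(\|\tilde u(0)\|_{L^2}+\|\tilde g\|_{L^{p_2'}((0,T),L^{q_2'})}\big)$. Undoing the substitution through the isometries above produces \eqref{wlocstrichartz}. The main obstacle is the boundedness of $\phi'/\phi$ on $[1,\infty)$ via the Riccati argument; the rest is the classical conjugation‑to‑Euclidean trick together with the standard perturbation of Strichartz estimates by a bounded potential.
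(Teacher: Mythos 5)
Your proposal is correct and follows essentially the same route as the paper: conjugate by $(\phi/r)^{(n-1)/2}$ to reduce to a Schr\"odinger equation on $\RR^n$ with the potential $V=\frac{n-1}{2}\frac{\phi''}{\phi}+\frac{(n-1)(n-3)}{4}\bigl((\phi'/\phi)^2-1/r^2\bigr)$, prove $V$ is bounded (near $0$ via the parity/Taylor conditions on $\phi$, and at infinity by bounding $\phi'/\phi$ through the Riccati inequality $w'\leq m-w^2$ together with the lower bound $1/\phi\leq m$), and conclude by local-in-time Strichartz for bounded potentials. The only cosmetic differences are that the paper's upper bound on $\phi'/\phi$ uses an integrating-factor variant of your Riccati comparison, and that the paper cites the bounded-potential Strichartz estimate rather than rederiving it by Duhamel and absorption as you do.
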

Condition \eqref{secbounded} implies that the growth of $\phi$ is at most exponential at infinity. For larger growth it is not clear that the weighted Strichartz estimate still holds (see Remark \ref{quadra}).

\begin{remark}
The growth of the weight function $\frac{\phi(r)}{r}$ can be related to a sign condition on the curvature in the following way. Let us suppose that the tangential sectional curvatures is nonpositive starting from an $r_0$. Then $\phi'(r)\geq 1$ for all $r\geq r_0$, because $\phi'$ is a  continuous function and, as $\phi$ is positive,  we cannot have $\phi'(r)\leq -1$ for all $r\geq r_0$. It follows 
$$\phi(r)-\phi(r_1)=\int_{r_0}^r\phi'(s)\,ds\geq r-r_0,$$
so as $r$ goes to infinity
$$1\lesssim \frac{\phi(r)}{r}.$$

Proposition \ref{prop.local} does not yield any gain when the tangential sectional curvature is positive at infinity. Indeed in this case $\frac{\phi}{r}$ is bounded. %However, it is possible to construct complete manifolds whose curvature is bounded, without constant sign at infinity and such that $\frac{\phi}{r}$ tends to infinity as $r$ tends to infinity (so that \eqref{wlocstrichartz} holds). 
Note that in the case of noncompact manifold, the curvature cannot be "too positive": if the manifold $M$ is complete and the sectional curvature bounded from below by a positive constant, then $M$ must be compact (see \cite[Theorem 4.1]{Pe98BO}).
\end{remark}

In Proposition \ref{prop.local} no non-trapping condition is imposed. This is due to the fact that we are working in the radial setting. In the non-radial case \eqref{wlocstrichartz} is not true in general for trapping manifolds. %in view of the example, where the trapped geodesic comes from a non-degenerate maximum of $\phi$.
\\

We now turn to the global estimates. 

If $l\in \RR$, $k\in \NN^*$ and $\eps$ is a $C^k$ function for $r\geq 1$, we write 
$$\eps(r)=o_k(r^l),\; r\rightarrow+\infty$$
 if there exist a constant $C>0$ such that
$$ \forall j\in\{0,\ldots,k\},\; \forall r\geq 1,\quad |\eps^{(j)}(r)|\leq Cr^{l-j}.$$

We will state a result for manifolds such that the volume element grows polynomially at infinity. See Theorem \ref{theo.exp'} in Section \ref{sec:rev} for an analogue in the case of exponential grow.

\begin{theo}\label{theo.poly}
Let $M$ be a rotationally symmetric manifold of dimension $n\geq 3$ and let $m>\frac{1}{n-1}$. Assume that
\begin{gather}
\label{negativecurvature}
\exists \delta_0>0,\quad\forall r\geq 0,\quad sec^{rad}_r\leq \left(\frac{1}{2(n-1)}-\delta_0\right)\frac{1}{r^2},\\
\label{polynomialbehaviour}
\exists A>0,\quad \phi(r)=A r^m+o_3(r^m),\quad r\rightarrow +\infty.
\end{gather}
Then the radial solutions of the free equation \eqref{eq:ls} satisfy for all n-admissible couples $(p_j,q_j)$ the weighted Strichartz estimate
\begin{equation}\label{wstrichartz}
\left\|u\left(\frac{\phi(r)}{r}\right)^{\frac{n-1}{2}\left(1-\frac{2}{q_1}\right)}\right\|_{L^{p_1}(\mathbb{R},L^{q_1}(M))}\leq C\left\|u_0\right\|_{L^2(M)}+C\left\|f\left(\frac{r}{\phi(r)}\right)^{\frac{n-1}{2}\left(1-\frac{2}{q_2}\right)}\right\|_{L^{p_2}(\mathbb{R},L^{q_2}(M))}.
\end{equation}
Furthermore if $m>1$, and 
$$ N:=m(n-1)+1$$
then for any $d\in (n,N)$, the solutions of \eqref{eq:ls} satisfy all global $d$-admissible Strichartz estimates. 
\end{theo}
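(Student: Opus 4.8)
The plan is to conjugate \eqref{eq:ls}, restricted to radial data, to a one--dimensional Schr\"odinger equation on the half--line with an inverse--square--type potential, and then to run the dispersive--to--Strichartz machinery; the whole novelty sits in the dispersive step. \emph{(Conjugation.)} For radial $u$ put $v(t,r)=\phi(r)^{(n-1)/2}u(t,r)$. Using \eqref{Laplace} one checks $\phi^{(n-1)/2}\Delta_M\phi^{-(n-1)/2}=\partial_r^2-V(r)$ on $(0,\infty)$, with
$$V(r)=\frac{(n-1)(n-3)}{4}\,\frac{(\phi')^2}{\phi^2}+\frac{n-1}{2}\,\frac{\phi''}{\phi},$$
so $u$ solves \eqref{eq:ls} iff $i\partial_t v+\partial_r^2v-Vv=\phi^{(n-1)/2}f$, and $\|u\|_{L^2(M)}\simeq\|v\|_{L^2(0,\infty)}$; a direct computation shows the weight in \eqref{wstrichartz} is designed exactly so that $\big\|u(\phi/r)^{\frac{n-1}{2}(1-\frac2q)}\big\|_{L^q(M)}\simeq\big\|v\,r^{-(n-1)(\frac12-\frac1q)}\big\|_{L^q(0,\infty)}$, while $\|u\|_{L^q(M)}\simeq\big\|v\,\phi^{-(n-1)(\frac12-\frac1q)}\big\|_{L^q(0,\infty)}$. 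One records the relevant facts on $H:=-\partial_r^2+V$: it is $C^1$ on $(0,\infty)$; near $0$, since $\phi(r)\sim r$, $V(r)=\frac{(n-1)(n-3)}{4r^2}+o(r^{-2})$ (the dimension--$n$ inverse square); near $+\infty$, by \eqref{polynomialbehaviour} with $N=m(n-1)+1$, $V(r)=\frac{(N-1)(N-3)}{4r^2}+W(r)$ with $W\in C^1$ and $W,W'$ decaying faster than $r^{-2}$; the conditions $m>\tfrac1{n-1}$ and (for the last assertion) $m>1$ give $N>2$ and $N>n$ respectively, so both inverse--square coefficients stay above the Hardy threshold $-\tfrac14$; and \eqref{negativecurvature} is precisely $\tfrac{n-1}{2}\phi''/\phi\ge(-\tfrac14+\delta)r^{-2}$ for some $\delta>0$, whence (the first term of $V$ being $\ge0$ since $n\ge3$) Hardy's inequality gives $H\ge\delta\,r^{-2}>0$, so $H$ has no eigenvalue and no zero--energy resonance.

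\emph{(Dispersive estimates.)} This is the core. For \eqref{wstrichartz} I would establish the weighted dispersive bound
$$\big\|\,r^{-(n-1)/2}\,e^{itH}\,r^{-(n-1)/2}\,\big\|_{L^1(0,\infty)\to L^\infty(0,\infty)}\ \lesssim\ |t|^{-n/2}:$$
near $r=0$ it is the classical radial dispersive estimate on $\RR^n$ (equivalently, for $-\Delta+ar^{-2}$), available in the literature; near $r=\infty$ it follows, since $H$ there is a $C^1$, faster--than--$r^{-2}$ perturbation of an inverse square, from the new dispersive result of the paper for such potentials. For the enlarged range of exponents, when $m>1$ I would prove the sharper, \emph{biphasic} bound for radial data: writing $\mathcal K_t$ for the kernel of $e^{it\Delta_M}$ against the radial volume $\phi^{n-1}dr$ (equivalently of $\phi^{-(n-1)/2}e^{itH}\phi^{-(n-1)/2}$ against $dr$),
$$\sup_{r,s>0}|\mathcal K_t(r,s)|\ \lesssim\ |t|^{-n/2}\ \text{ for }|t|\le1,\qquad \sup_{r,s>0}|\mathcal K_t(r,s)|\ \lesssim\ |t|^{-N/2}\ \text{ for }|t|\ge1.$$
The short--time $n$--rate is the local, dimension--$n$ Euclidean behaviour; the long--time $N$--rate reflects that at large scales $M$ has dimension $N$ (volume density $\phi^{n-1}\sim r^{N-1}$), and is obtained by combining the $\RR^N$--type decay with the control of $W$, uniformly through the transition region $r\sim\sqrt{|t|}$ where the two scales meet.

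\emph{(From dispersion to Strichartz.)} Interpolating the first bound with $L^2$--conservation and running the $TT^*$ argument --- with the care the weight $r^{-(n-1)/2}$, unbounded at $0$, requires, as in the radial Euclidean / inverse--square literature --- together with the Christ--Kiselev lemma for the Duhamel term, one obtains the homogeneous and inhomogeneous weighted Strichartz estimates for all $n$--admissible pairs, which via the conjugation are exactly \eqref{wstrichartz}. Feeding instead the biphasic bound into the same scheme (now with the weight $\phi^{-(n-1)/2}$, which is the $\RR^n$--weight near $0$ and the $\RR^N$--weight near $\infty$), the relevant time--convolution kernel after interpolation is $k(\tau)=|\tau|^{-\frac n2(1-\frac2q)}\mathbf 1_{\{|\tau|\le1\}}+|\tau|^{-\frac N2(1-\frac2q)}\mathbf 1_{\{|\tau|\ge1\}}$, which belongs to $L^r(\RR)$ precisely for $\tfrac{2}{N(1-2/q)}<r<\tfrac{2}{n(1-2/q)}$; Young / Hardy--Littlewood--Sobolev with $r=p/2$ then yields the homogeneous and inhomogeneous estimates for every $(p,q)$ strictly between the $n$--admissible and the $N$--admissible lines, i.e.\ for every $d$--admissible pair with $d\in(n,N)$. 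As the $\phi^{-(n-1)/2}$--conjugated norm at exponent $q$ is the \emph{unweighted} $L^q(M)$ norm, these are the plain $d$--admissible estimates asserted.

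\emph{(Main obstacle.)} The conjugation and the passage to Strichartz are bookkeeping and standard harmonic analysis; the entire difficulty lies in the dispersive estimates, and within them in the long--time $|t|^{-N/2}$ rate and in the uniformity of the kernel bound across the scale transition. Unlike the model cases ($\RR^n$, hyperbolic space, Damek--Ricci spaces) there is no exact spectral transform diagonalising $H$, so the analysis of $e^{itH}$ for a merely $C^1$ inverse--square--type potential --- the new dispersive input announced in the introduction --- is where the real work is done; \eqref{negativecurvature} enters only to exclude bound states and resonances, and \eqref{polynomialbehaviour} only to pin down the two inverse--square profiles and the dimension $N$ at infinity.
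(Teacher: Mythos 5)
Your conjugation step and the verification that the potential stays above the Hardy threshold are essentially right (the paper normalizes slightly differently, setting $u=\sigma v$ with $\sigma=(r/\phi)^{(n-1)/2}$ so as to land on an \emph{$n$-dimensional} Schr\"odinger equation $i\partial_t v+\Delta_{\RR^n}v-Vv=g$ whose potential $V=\tau''/\tau-\tfrac{(n-1)(n-3)}{4r^2}$ is $C^1$ and \emph{bounded at the origin} --- this is deliberate, since the whole point of Theorem \ref{theo.dispersive} is to have a variant of the Burq--Planchon--Stalker--Tahvildar-Zadeh result adapted to potentials smooth at $0$ rather than singular there). But the core of your argument --- the dispersive estimates --- is a genuine gap. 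You assert a weighted $L^1\to L^\infty$ bound $\lesssim|t|^{-n/2}$ and, for $m>1$, a ``biphasic'' kernel bound $\lesssim|t|^{-n/2}$ for $|t|\le1$ and $\lesssim|t|^{-N/2}$ for $|t|\ge1$, and you attribute the first to ``the new dispersive result of the paper.'' The paper proves no pointwise dispersive estimate of any kind: Theorem \ref{theo.dispersive} is a smoothing/Strichartz statement obtained from \emph{uniform weighted resolvent estimates} $\|\brx^{-1}(P_V-\lambda-i\eps)^{-1}\brx^{-1}\|\le C(1+|\lambda|)^{-1/2}$ (BPST's resolvent bound glued to the exterior region, Agmon--Kato--Kuroda at bounded positive energies, a high-energy perturbation argument, and a compactness argument excluding a zero resonance), followed by Kato smoothing and the Keel--Tao endpoint with the $V u$ term absorbed via O'Neil's inequality. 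Pointwise dispersion for a merely $C^1$, $O(r^{-2})$ potential is not available in the literature in this generality, and your long-time $|t|^{-N/2}$ rate with uniformity across the transition region $r\sim\sqrt{|t|}$ is a substantially harder (and unproved) claim than anything the theorem requires; note also that if it were true it would give the $d$-admissible estimates up to the endpoints $d=n,N$, whereas the theorem only claims the open interval.

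The second assertion of the theorem in fact needs no new dispersive input at all: the paper deduces the $d$-admissible estimates, $n<d<N$, directly from the weighted $n$-admissible estimate \eqref{wstrichartz} by H\"older's inequality in the space variable, using that $\sigma^{2d/(d-n)}\phi^{n-1}\approx r^{-n(N-d)/(d-n)-1}$ is integrable (condition (I1) of Lemma \ref{locgen}), together with mass conservation and interpolation. Separately, you write that \eqref{negativecurvature} ``enters only to exclude bound states and resonances'': this misses that the proof also needs the repulsivity hypothesis \eqref{HypRepulsive}, i.e.\ control of $\partial_r(rV)$ at infinity; this is where the $o_3$ regularity in \eqref{polynomialbehaviour} is consumed (one computes $\tau''/\tau=\tfrac{(N-1)(N-3)}{4r^2}+o_1(r^{-2})$ and then $-r^2\frac{d}{dr}(r\tau''/\tau)=\tfrac{(N-1)(N-3)}{4}+o(1)>-\tfrac14$), and without some such condition global Strichartz estimates for $1/r^2$-type potentials can fail.
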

Notice that if $sec^{rad}_r$ is nonpositive, assumption \eqref{negativecurvature} holds.
\begin{remark}
\label{rem.euclidian}
Under the assumptions of the preceding theorem, if $m>1$ and $N$ is an integer, the volume element at infinity is $\phi^{n-1}dr\approx r^{m(n-1)}dr$, which is the volume element of $\RR^{N}$. In this case the radial solutions of \eqref{eq:ls} admit all Strichartz estimates without weight for couples that are between  $n$-admissible and $N$-admissible. Note if $d_1<d_2$, the $d_1$-admissible couples are better from the point of view of local well-posedness, whereas the $d_2$-admissible couples yield a better decay for large time, and thus stronger scattering results (see Corollary \ref{corol.scattering}).
\end{remark}
\begin{remark}
Theorem \ref{theo.poly} also gives weighted Strichartz estimates in the case $\frac{1}{n-1}<m<1$. In this case, the weight is a loss compared to the usual estimates. The assumption $\frac{1}{n-1}<m$ means that the volume density is larger, at infinity, that the one of the Euclidian plane $\RR^2$.
\end{remark}

It is easy to give examples of manifolds $M$ satisfying the assumptions of Theorem \ref{theo.poly}. For example, take
\begin{equation}
\label{defphipoly}
\phi(r)=r+a_1 r^3+...+a_kr^{2k+1},
\end{equation}
where $k\geq 1$ and $a_i>0$, $i=1\ldots k$.

\begin{remark}
It is also possible to get sufficient condition for the weighted Strichartz estimates in term of the square root of the volume element:
\begin{equation}
\label{deftau}
\tau=\phi^{\frac{n-1}{2}}.
\end{equation}
Namely, the conclusions of Theorem \ref{theo.poly} still hold if assumption \eqref{negativecurvature} is replaced by the assumption that their exists $\delta_0>0$ such that
\begin{equation}
\tag{\ref{negativecurvature}'}
\frac{\tau''}{\tau}\geq -\frac{1/4-\delta_0}{r^2}.
\end{equation}
We refer to Proposition \ref{prop.manif} for a general result. 
\end{remark}

\begin{remark}
The above results hold not only for rotationally symmetric manifolds, but for all manifold $M^n$ admitting a global coordinate system $(r,\theta)$ for which the radial part of the Laplacian equals to $\partial_r^2+(n-1)\frac{\phi'}{\phi}\partial_r$ and the volume element is $\phi^{n-1}dr$. Furthermore as a consequence of Theorem \ref{theo.dispersive} below, a local-in-space $1/2$-smoothing effect also holds, that we did not state for the sake of brievity.
\end{remark}

Let us turn to the consequence of the preceding result in term of nonlinear scattering.
%Let us notice that condition \eqref{volumecond} holds if $\phi(r)=r^{1+\alpha}$ with $\alpha>0$. 
% For $d=3$, $sec_r^{tan}$ does not appear in the conditions \eqref{secpositive} and \eqref{secderiv}, but in view of \eqref{negativesec}, \eqref{secpositive} implies that the sectional curvature must be nonpositive everywhere.\par
We will say that the equation 
\begin{equation}
\label{NLS}
i\d_t u+\Delta_{M} u\pm|u|^{p}u=0,\quad u(0)=u_0\in H^1(M)
\end{equation}
has \textbf{short-range behaviour} when for all $u_0\in H^1$, there exists $\tilde{u}_0\in H^1$ such that
$$ \lim_{t\rightarrow +\infty} \|u(t)-e^{it\Delta}\tilde{u}_0\|_{H^1}=0.$$

\begin{corol}\label{corol.scattering}
Assume that the conditions of Theorem \ref{theo.poly} hold and that $m>1$. Let $p\in (0,+\infty)$ such that
$$\frac{4}{N}< p<\frac{4}{n-2},$$
where $N$ is defined in Theorem \ref{theo.poly}. Then \eqref{NLS} has short-range behavior.
\end{corol}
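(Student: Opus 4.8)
The plan is to deduce short-range scattering from the $d$-admissible Strichartz estimates provided by Theorem \ref{theo.poly} applied with $d$ chosen appropriately in the interval $(n,N)$, combined with the standard Cazenave--Weissler fixed-point/scattering machinery. First I would recall that, since $m>1$, Theorem \ref{theo.poly} gives the full family of global-in-time $d$-admissible Strichartz estimates (without weight) for every $d\in(n,N)$. The exponent $p$ is assumed to satisfy $4/N<p<4/(n-2)$; I would pick $d\in(n,N)$ close enough to $N$ so that $4/d<p$ still holds, and note that automatically $p<4/(n-2)<4/(d-2)$, so $p$ is \emph{$H^1$-subcritical and $L^2$-supercritical relative to the $d$-dimensional scaling}. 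Concretely, this means one can choose a $d$-admissible Strichartz pair $(q,r)$ (and its dual) such that the nonlinearity $|u|^p u$ maps the relevant space-time space into the dual space-time space by Hölder in space and in time, using $H^1(M)\hookrightarrow L^s(M)$ Sobolev embeddings valid on $M$ (the radial Sobolev embeddings here follow from the volume growth $\phi^{n-1}\sim r^{m(n-1)}$, i.e. the same embeddings as on $\RR^N$ for radial functions; this is the place where $N=m(n-1)+1$ enters).

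Second, I would run the contraction-mapping argument in a ball of the space $C(\RR;H^1(M))\cap L^{q}(\RR;W^{1,r}(M))$ (intersected over a suitable finite collection of admissible pairs), first for small data and then, since the equation is $H^1$-subcritical, upgrading to arbitrary $H^1$ data by the usual time-subdivision/global existence argument — here one uses conservation of the $H^1$ norm (mass and energy conservation for \eqref{NLS}, which hold on $M$ exactly as on $\RR^n$ since $\Delta_M$ is self-adjoint). The key output is that any global solution has finite global Strichartz norms, in particular $u\in L^{q}(\RR;W^{1,r}(M))$.

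Third, with global finite Strichartz norms in hand, scattering is routine: define $\tilde u_0 = u_0 + \int_0^{+\infty} e^{-is\Delta_M}\big(\pm|u|^p u(s)\big)\,ds$, show the integral converges in $H^1$ by the dual Strichartz estimate applied to $|u|^p u$ on $(t,+\infty)$ together with the finiteness of the global norm (tail goes to $0$), and conclude $\|u(t)-e^{it\Delta_M}\tilde u_0\|_{H^1}\to 0$ as $t\to+\infty$ by the same tail estimate. The same computation shows $\tilde u_0\in H^1$.

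The main obstacle — and the only point that is genuinely specific to this setting rather than textbook — is verifying that the nonlinear estimates close, i.e. that for the chosen $d\in(n,N)$ one can simultaneously satisfy: (i) $p>4/d$ so the relevant Strichartz exponents are not $L^2$-critical and there is room for the time-Hölder; (ii) the Sobolev embedding $H^1(M)\hookrightarrow L^\sigma(M)$ at the exponent $\sigma$ dictated by the Hölder bookkeeping, which requires $\sigma\le 2d/(d-2)$ and hence $p\le 4/(d-2)$, guaranteed by $p<4/(n-2)$ and $d>n$; and (iii) that these Sobolev embeddings actually hold on $M$ in the radial class with the $N$-dimensional exponent. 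Point (iii) should follow from the volume growth assumption \eqref{polynomialbehaviour} by a direct radial computation (the Gagliardo--Nirenberg/Strauss-type inequality only sees the measure $\phi^{n-1}dr$), but it is where the polynomial-growth hypothesis is essential; once it is in place the interval $4/N<p<4/(n-2)$ is exactly the range for which some admissible $d$ works, and the rest is the standard argument.
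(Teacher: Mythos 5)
Your overall route is the same as the paper's: the paper disposes of the corollary in one line by invoking Lemma \ref{glgen}, which says that global $d$-admissible Strichartz estimates yield short-range wave operators for all $\frac{4}{d}\leq p<\frac{4}{d-2}$ via the standard H\"older/Sobolev/fixed-point machinery of Ginibre's notes; the corollary is then the union over $d\in(n,N)$ of these ranges. Your proposal fleshes out exactly that machinery, so in spirit it matches.

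There is, however, a concrete error in the one step that is actually specific to this corollary, namely the choice of $d$. You write that ``automatically $p<4/(n-2)<4/(d-2)$'' and later that $p\le 4/(d-2)$ is ``guaranteed by $p<4/(n-2)$ and $d>n$.'' This inequality is reversed: for $d>n$ one has $d-2>n-2$, hence $\frac{4}{d-2}<\frac{4}{n-2}$, so $p<\frac{4}{n-2}$ does \emph{not} imply $p<\frac{4}{d-2}$. Consequently your prescription ``pick $d$ close to $N$'' fails whenever $p$ is near the upper endpoint $\frac{4}{n-2}$: for such $p$ and $d$ near $N$ the nonlinearity is $H^1$-supercritical relative to the $d$-dimensional scaling and the contraction/scattering estimates do not close. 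The correct argument is that one needs $d$ with $\frac{4}{d}\leq p<\frac{4}{d-2}$, i.e.\ $d\in\bigl[\frac{4}{p},\frac{4}{p}+2\bigr)$, and this interval meets $(n,N)$ precisely because $\frac{4}{N}<p<\frac{4}{n-2}$ gives $\frac{4}{p}<N$ and $\frac{4}{p}+2>n$; so one should take $d\in\bigl(\max(n,\tfrac{4}{p}),\min(N,\tfrac{4}{p}+2)\bigr)$, which forces $d$ near $n$ (not near $N$) when $p$ is near $\frac{4}{n-2}$. Once $d$ is chosen this way, the rest of your outline is the standard argument the paper delegates to Lemma \ref{glgen}. (A related slip: the Sobolev embedding used for the upper bound on $p$ is the local, $n$-dimensional one, $H^1(M)\hookrightarrow L^{2n/(n-2)}$ — not an ``$\RR^N$'' embedding, which cannot hold near the origin where $M$ is genuinely $n$-dimensional; this is exactly why the admissible range of $p$ is capped at $\frac{4}{n-2}$ rather than $\frac{4}{N-2}$.)
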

Let us recall that on the Euclidean space the critical power is $\frac{2}{n}$: for smaller powers the solutions cannot have the behavior of a free solution (\cite{St74BO}, \cite{Ba84}). The bound $\frac{4}{N}$ of Corollary \ref{corol.scattering} is better as soon as the power $m$ in \eqref{polynomialbehaviour} is larger than $2+\frac{1}{n-1}$. Note that the upper bound for scattering is still $\frac{4}{n-2}$, which is the $\RR^n$ upper bound.

The proofs of Proposition \ref{prop.local} and Theorem \ref{theo.poly} rely on a change of unknown function in \eqref{eq:ls} similar to the one of Pierfelice in \cite{Pi05}, and related to the volume density. In the radial case the equation is reduced to the linear Schr\"odinger equation 
\begin{equation}
\label{LS}
\tag{$S_V$}
\left\{\begin{array}{c}  
i\partial_t v+\Delta v -V v=g,\\
v(0)=v_0\in L^2(\RR^n).
\end{array}\right.
\end{equation}
with the particular potential
\begin{equation}\label{V}
V=\frac{\tau''}{\tau}-\frac{(n-1)(n-3)}{4r^2},\text{ where }\tau=\phi^{\frac{n-1}{2}}.
%\frac{n-1}{2}\frac{\phi''}{\phi}+\frac{(n-1)(n-3)}{4}\,\left(\frac{(\phi')^2-1}{\phi^2}+\frac{1}{\phi^2}-\frac{1}{r^2}\right),
\end{equation}
The conditions $\phi'(0)=1$ and $\phi^{(even)}(0)=0$ imply that $V$ is bounded and smooth near $0$. The assumptions of Proposition \ref{prop.local}, ensure the boundedness of $V$ for large $r$, which is sufficient to get the local in time Strichartz estimates.

Under the assumptions of Theorem \ref{theo.poly}, the potential $V$ defined in \eqref{V} decays like $1/r^2$ at infinity, which is critical for global Strichartz estimates (see \cite{GoVeVi06} for a counterexample when the decay is slower). For potential of order $1/r^2$, under positivity and repulsion assumptions on $V$ (analoguous to our assumptions \eqref{HypPositive} and \eqref{HypRepulsive} below), Strichartz estimates are shown in \cite{BuPlStTZ04}. We also refer to \cite{BaRuVe06} for the smoothing effect and to \cite{RoSc04}, where dispersion is shown in dimension $3$, with a potential whose decay is almost critical at infinity. In this last work a lower local regularity as well as time-dependence are allowed (see also \cite{Goldberg06}). \par
The assumptions in \cite{BuPlStTZ04} are well-suited for a potential with at pole at the origin, but do not always cover our case. We give a variant of their results which is more adapted to potentials that are also smooth at the origin. More precisely, we consider the linear Schr\"odinger equation \eqref{LS} with real potential $V$ on $\RR^n$, $n\geq 3$ and define the following assumptions
\begin{gather}
\label{HypBound}
\tag{H1}
\exists C>0,\;\forall x\in \RR^n,\quad |V(x)|\leq \frac{C}{\brx^2},\\
\label{HypPositive}
\tag{H2}
\exists \delta_0>0,\;\left(\frac{n}{2}-1\right)^2+r^2 V\geq \delta_0,\\
\label{HypRepulsive}
\tag{H3}
\exists R>0,\quad |x|\geq R\Longrightarrow \left(\frac{n}{2}-1\right)^2-r^2\partial_r(r V)\geq \delta_0.
\end{gather}
where $\brx=(1+|x|^2)^{1/2}$, $r=|x|$ and $\partial_r$ is the radial derivative $\frac{x}{|x|}\cdot \partial_x$. 

Note that these assumptions are similar to the one in \cite{BuPlStTZ04}, expect that in their case the potential need not be bounded at the origin, and that the analogue of \eqref{HypRepulsive} must hold for any $x\neq 0$.

\begin{theo}
\label{theo.dispersive}
Assume that $n\geq 3$ and that $V\in C^{1}(\RR^n)$ satisfies assumptions \eqref{HypBound}, \eqref{HypPositive} and \eqref{HypRepulsive}. Then the (possibly nonradial) solutions of \eqref{LS} satisfy the following.\\ 
i) Smoothing effect : there exists $C>0$ such that for all $g$ with $\brx g\in L^2\big(\RR,H^{-1/2}\big)$ we have
\begin{equation}
\label{smoothing}
\|\brx^{-1} v\|_{L^2(\RR,H^{1/2})}\leq C\left(\|v_0\|_{L^2}+\|\brx g\|_{L^2(\RR,H^{-1/2})}\right).
\end{equation}
ii) Strichartz estimates : there exists $C>0$ such that for all n-admissible couples $(p_1,q_1)$, $(p_2,q_2)$, and  
for all solution of \eqref{LS} with $g\in L^{p_2'}(\RR,L^{q_2'})$ we have
\begin{equation}
\label{Strichartz}
\|v\|_{L^{p_1}(\RR,L^{q_1})}\leq C\left(\|v_0\|_{L^2}+\|g\|_{L^{p_2'}(\RR,L^{q_2'})}\right).
\end{equation}
\end{theo}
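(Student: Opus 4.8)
The plan is to establish both parts of Theorem~\ref{theo.dispersive} via the standard strategy: first obtain the local smoothing estimate \eqref{smoothing} by a positive commutator (Morawetz/Mourre-type) argument exploiting \eqref{HypPositive} and \eqref{HypRepulsive}, and then deduce the Strichartz estimates \eqref{Strichartz} from \eqref{smoothing} plus the free Strichartz estimates via a perturbative argument, using that $V$ decays like $\brx^{-2}$ from \eqref{HypBound}. The point is that assumptions \eqref{HypBound}--\eqref{HypRepulsive} are, up to the regularity at the origin, the hypotheses of \cite{BuPlStTZ04}, so much of the machinery can be imported; the work is in checking that the arguments there only use the hypotheses on $\{|x|\ge R\}$ in an essential way, and that near the origin the boundedness and $C^1$ regularity of $V$ suffice.

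\textbf{Step 1 (Smoothing effect).} I would prove \eqref{smoothing} by testing the equation against a suitable multiplier of the form $a(r)\partial_r v + \frac{1}{2}a'(r) v$ (a Morawetz multiplier), with $a$ a bounded, increasing function behaving like $r$ for small $r$ and like a constant at infinity, e.g. $a(r)=r/\brx$ or $a(r)=\arctan r$. Differentiating the resulting quantity in time and integrating, the terms produced are: a positive spacetime term $\sim\int a'(r)|\nabla v|^2$ controlling the angular and radial derivatives away from the origin; a term $\sim\int \left(\frac{a'}{r}-a''\right)(\text{something positive involving angular derivatives and }(\tfrac n2-1)^2)$ which, combined with the Hardy inequality, is where \eqref{HypPositive} enters to absorb the $(\tfrac n2-1)^2/r^2$ piece with a favorable sign; and the potential term $-\int \left(a\partial_r V + \frac{?}{}\right)|v|^2$, more precisely $\int \left(a(r)\partial_r V + \tfrac12 a'(r)\cdot 0\right)$-type contributions, whose good sign for $|x|\ge R$ comes precisely from \eqref{HypRepulsive} (repulsivity of $rV$), while for $|x|\le R$ it is an error term bounded by $\|\brx^{-1}v\|_{L^2H^{1/2}}^2$ with a small constant after localization, or absorbed using that $V\in L^\infty$ there. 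The inhomogeneous term $g$ is handled by duality, pairing with $\brx^{-1}v$ in $H^{\pm 1/2}$ and Cauchy--Schwarz. One must also control the boundary-in-time terms $[\langle \text{multiplier}\, v, v\rangle]$ by $\|v_0\|_{L^2}^2$ and $\|v(t)\|_{L^2}^2$, using $L^2$ conservation up to the $g$ contribution. Interpolating the radial-derivative and angular-derivative gains yields the full $H^{1/2}$ norm on the left.

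\textbf{Step 2 (Strichartz from smoothing).} Having \eqref{smoothing}, I would write Duhamel's formula $v(t)=e^{it\Delta}v_0 - i\int_0^t e^{i(t-s)\Delta}(Vv+g)(s)\,ds$ and apply the free Strichartz estimates \eqref{StrichartzIntro}--\eqref{admissible} on $\RR^n$ to each term. The term with $g$ is immediately bounded by $\|g\|_{L^{p_2'}L^{q_2'}}$. For the term with $Vv$, I would use $|V|\lesssim \brx^{-2}$ so that $\|Vv\|_{L^2(\RR,L^{2n/(n+2)})}\lesssim \|\brx^{-2}v\|_{L^2 L^{2n/(n+2)}}$ and then, by Sobolev embedding $H^{1/2}\hookrightarrow L^{2n/(n-1)}$ together with Hölder, bound this by $\|\brx^{-1}v\|_{L^2 H^{1/2}}$ (the two factors of $\brx^{-1}$ from $\brx^{-2}$ split between the two Hölder factors), which is exactly the quantity controlled by \eqref{smoothing}. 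Since $(2,\tfrac{2n}{n-2})$ is $n$-admissible, its dual $(2,\tfrac{2n}{n+2})$ is the admissible pair for the inhomogeneous endpoint, and the Christ--Kiselev lemma handles the retarded integral for the non-endpoint pairs; the endpoint itself follows from the Keel--Tao inhomogeneous estimate. This closes the estimate.

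\textbf{The main obstacle} I expect is Step 1, specifically making the positive commutator argument robust enough to simultaneously (a) accommodate a potential that is merely $C^1$ and bounded at the origin rather than enjoying extra structure there, so that $\partial_r(rV)$ appears only with the weak control \eqref{HypRepulsive} valid for $|x|\ge R$ and must be treated as a bounded error inside $\{|x|\le R\}$; and (b) produce the full $H^{1/2}$ smoothing norm, including the angular part and the behavior at $r=0$, uniformly in time, which requires a careful choice of the multiplier $a(r)$ near the origin (where $a\sim r$ forces the Hardy-type positive term) and a clean interpolation. The bookkeeping of the boundary terms and the low-frequency/high-frequency split in the $H^{1/2}$ norm is where the proof becomes technical, but conceptually it is the same Morawetz estimate as in \cite{BuPlStTZ04,BaRuVe06} adapted to hypotheses localized at infinity.
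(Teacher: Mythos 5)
Your Step 2 is in spirit the paper's argument (treat $Vv$ as a source, use the smoothing norm to absorb it, then recover the full range by duality and interpolation), but your Step 1 has a genuine gap, and it is precisely the point where the paper takes a different route. A direct Morawetz/positive-commutator computation cannot close under the hypotheses as stated: the repulsivity \eqref{HypRepulsive} is only assumed for $|x|\geq R$, so inside $\{|x|\leq R\}$ the multiplier identity produces a term of size $\int_{|x|\leq R}|v|^2$, which is of the \emph{same order} as the quantity $\|\brx^{-1}v\|_{L^2}^2$ you are trying to control, with no small parameter to absorb it ($V$ is merely bounded and $C^1$ there, and $\delta_0$ may be tiny). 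This is not a bookkeeping issue: whether the global-in-time smoothing estimate holds depends on spectral information (absence of embedded positive eigenvalues and of a zero-energy resonance) that the multiplier identity alone does not see. The paper therefore proves instead a uniform weighted resolvent estimate $\|\brx^{-1}(P_V-\lambda-i\eps)^{-1}\brx^{-1}\|_{L^2\to L^2}\leq C(1+|\lambda|)^{-1/2}$ (Proposition \ref{lem.estimates}) and deduces smoothing by Kato's theory. The resolvent bound is assembled energy by energy: Agmon--Kato--Kuroda plus Kato's theorem on absence of positive eigenvalues for $\lambda$ in a compact subset of $(0,\infty)$; a perturbation of the free high-frequency resolvent for large $\lambda$; and, near $\lambda=0$, a compactness/contradiction argument (Lemma \ref{claim0}) that rules out a zero resonance using \eqref{HypPositive} and Hardy, combined with the resolvent estimate of \cite{BuPlStTZ04} applied to a modified potential $W$ that equals $V$ for $|x|\geq 2R$ and $A/r^2$ near the origin (Lemma \ref{claimVW}). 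If you want to keep a commutator-style proof you would essentially have to reprove this zero-energy non-resonance statement anyway.

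A secondary but real problem is the exponent bookkeeping in your Step 2: writing $Vv=\brx^{-1}\cdot(\brx^{-1}v)$ and using $H^{1/2}\hookrightarrow L^{2n/(n-1)}$, H\"older forces $\brx^{-1}\in L^{2n/3}(\RR^n)$, which fails at infinity; and the cruder split $\|\brx V\|_{L^n}\|\brx^{-1}v\|_{L^2}$ fails too since $\brx^{-1}\notin L^n$. This borderline is exactly why the paper works in Lorentz spaces: $\brx^{-1}\in L^{n,\infty}$, O'Neil's inequality gives $\|Vv\|_{L^{2n/(n+2),2}}\leq\|\brx V\|_{L^{n,\infty}}\|\brx^{-1}v\|_{L^2}$, and one invokes the refined Keel--Tao endpoint in $L^{2n/(n+2),2}$; only the $L^2$ part of the smoothing norm is needed. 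Also note that the inhomogeneous endpoint with a general source $g$ requires the explicit $T\!T^*$/duality step carried out in the paper (the operators $A$ and $A^*$), not just Christ--Kiselev.
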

\begin{remark}
\label{rem.constant}
Theorem \ref{theo.dispersive} remains valid if for some real constant $\beta$, $V-\beta$ satisfies the assumptions \eqref{HypBound} \eqref{HypPositive} and \eqref{HypRepulsive}. Indeed $\tilde{v}=e^{i\beta t}v$ is solution of the equation \eqref{LS} with the potential $V-\beta$ instead of $V$. This yields global Strichartz estimates for manifold such that the volume element grows exponentially at infinity (see Theorem \ref{theo.exp'} below).
\end{remark}

Let us a give a quick idea of the proof of Theorem \ref{theo.dispersive}. Following the strategy of \cite{BuPlStTZ04}, \eqref{Strichartz} is deduced from \eqref{smoothing}. Estimate \eqref{smoothing} is the consequence of an uniform weighted estimate on the resolvent $(-\Delta+V-\lambda)^{-1}$, which is classical except near $\lambda=0$. To treat this last case, which is closely related to the lack of resonance at $0$ for the operator $-\Delta+V$, we use a resolvent estimate shown in \cite{BuPlStTZ04}.\par

We finish this introduction with a few remarks and related open problems. We first note that we can extend the above dispersive results to the radial wave equation (see Lemma \ref{waves}).
Furthermore, for the sake of simplicity, we wrote the results in terms of $C^{\infty}$ manifolds. However the proof shows that Proposition \ref{prop.local} still holds when $\phi$ is of class $C^2$, and Theorem \ref{theo.poly} when $\phi$ is of class $C^3$.

In the present work we consider only radial solutions of \eqref{eq:ls}, which do not see the trapped geodesics of the manifold $M$. In the general nonradial case, we expect that the preceding results should hold under a non-trapping condition on the metric. However our method does not seem to adapt easily in the nonradial setting, where a new term $\frac{1}{\phi^2}\Delta_{S^{n-1}}v$ appears in \eqref{LS}. The fact that Theorem \ref{theo.dispersive} also holds for nonradial potentials and solutions is not helpful here. We refer to \cite{AnPiPr} for some results in this direction.

The validity of weighted (or classical) Strichartz inequalities for rotationally symmetric manifolds such that $\phi$ has an growth which is intermediate between polynomial and exponential is to our knowledge still open. This problem is related to the study of \eqref{LS} with a radial positive potential $V$ whose decay is of order $\frac{1}{|x|^{s}}$, $1<s<2$ at infinity. When $V$ is homogeneous of degree $s$ for large $|x|$ and nonradial, global Strichartz estimates fail in general (see \cite{GoVeVi06}). It is also the case by an adaptation of the example in \cite{Duyckaerts07} if $V$ is radial, tends to $0$ a little slower than $\frac{1}{|x|^2}$, but does not satisfy any analogue of our assumption \eqref{HypRepulsive}. However, the question remains to our knowledge still open, even for radial solutions of \eqref{LS}, when $V$ is a radial smooth positive potential decaying slower than $\frac{1}{|x|^2}$, and satisfying a repulsion assumption at infinity, for example:
$$ \frac{C}{(1+r)^s}\geq |V(r)|,\quad V(r)\geq \frac{\delta_0}{(1+r)^s}, \quad -(rV)'\geq \frac{\delta_0}{(1+r)^s}, \quad \delta_0>0,\; 1<s<2.$$
It seems also a difficult question to know, when $V$ is exactly of order $\frac{1}{|x|^2}$ at infinity, if the assumption \eqref{HypRepulsive} is necessary. A positive result in this direction would allow us to get Strichartz estimates without any condition on the first derivative of the curvature at infinity. Such a condition, which is assumption \eqref{tau'''<0} in Proposition \ref{prop.manif} below, is hidden in assumption \eqref{polynomialbehaviour}.

The paper is organized as follows. 
In \S\ref{gen} we describe the tranformation of \eqref{eq:ls} on a general rotationally symmetric manifold $M$ into the linear Schr\"odinger equation with potential on the Euclidean space, and its consequences in terms of  dispersive estimates.
In \S\ref{local} we prove Proposition \ref{prop.local}, and in \S\ref{globalimpr} we prove Theorem \ref{theo.poly} and Theorem \ref{theo.exp'}, which is its analogue when the volume element grows exponentially at infinity.
 In \S\ref{pot} we prove Theorem \ref{theo.dispersive}, first showing resolvent estimates (\S\ref{sub.resolvent}), then infering smoothing (\S\ref{sub.smooth}) and finally Strichartz estimates (\S\ref{sub.Str}).

 \thanks{The authors would like to thank R\'emi Carles, Olivier Drouet, Sorin Dumitrescu and Andrei Iftimovici for valuable discussions.}
 
\section{Gains for NLS on manifolds}
\label{sec:rev}

\subsection{General approach}\label{gen}

We consider the linear Schr\"odinger equation \eqref{eq:ls}
$$\left\{\begin{array}{c}  
i\d_t u+\Delta_{M}u=f,\\
u(0)=u_0
\end{array}\right.
$$
where
$$\Delta_M=\partial^2_r+(n-1)\frac{\phi'(r)}{\phi(r)}\partial_r+\frac{1}{\phi^2(r)}\Delta_{\mathbb{S}^{n-1}},$$
Let
$$\sigma(r):=\left(\frac{r}{\phi(r)}\right)^\frac{n-1}{2},\quad u(t,r)=\sigma(r)v(t,r),\quad f(t,r)=\sigma(r)g(t,r).$$
Then $v$ satisfies
$$i\d_t v+\d_r^2 v+\left(2\frac{\sigma'}{\sigma}+(n-1)\frac{\phi'}{\phi}\right)\d_r v+\left(\frac{\sigma''}{\sigma}+(n-1)\frac{\phi'}{\phi}\frac{\sigma'}{\sigma}\right) v+\frac{1}{\phi^2}\Delta_{\mathbb{S}^{n-1}}v=g.$$
Note that $\log \sigma=\frac{n-1}{2}\left(\log r-\log \phi\right)$, So that
\begin{equation}
\label{difflog}
\frac{\sigma'}{\sigma}=\frac{n-1}{2}\left(\frac 1r-\frac{\phi'}{\phi}\right).
\end{equation}
Thus the radial derivative part is the one of the Laplacian on $\mathbb{R}^n$. Differentiating \eqref{difflog} we get
$$\frac{\sigma''}{\sigma}-\frac{\sigma'^2}{\sigma^2}=\frac{n-1}{2}\left(-\frac{1}{r^2}-\frac{\phi''}{\phi}+\frac{\phi'^2}{\phi^2}\right).$$
Thus 
\begin{equation*}
\frac{\sigma''}{\sigma}+(n-1)\frac{\phi'}{\phi}\frac{\sigma'}{\sigma}=\frac{n-1}{2}\left(-\frac{1}{r^2}-\frac{\phi''}{\phi}+\frac{\phi'^2}{\phi^2}+2\frac{\phi'}{\phi}\frac{\sigma'}{\sigma}\right)+\frac{\sigma'^2}{\sigma^2}.
\end{equation*}
Hence, using the expression of $\frac{\sigma'}{\sigma}$ given by \eqref{difflog}
$$\frac{\sigma''}{\sigma}+(n-1)\frac{\phi'}{\phi}\frac{\sigma'}{\sigma}=\frac{(n-1)(n-3)}{4}\frac{1}{r^2}-\frac{(n-1)(n-3)}{4}\left(\frac{\phi'}{\phi}\right)^2-\frac{n-1}{2}\,\frac{\phi''}{\phi}.$$
In conclusion $v$ satisfies
$$i\d_t v+\Delta_{\mathbb{R}^n} v+\left(\frac{1}{\phi^2(r)}-\frac{1}{r^2}\right)\Delta_{\mathbb{S}^{n-1}}v-V(r)v=\frac f\sigma,$$
with, as in \eqref{V},
\begin{equation}
\label{expressionV2}
V(r)=\frac{n-1}{2}\frac{\phi''}{\phi}+\frac{(n-1)(n-3)}{4}\,\left(\left(\frac{\phi'}{\phi}\right)^2-\frac{1}{r^2}\right)=\frac{\tau''}{\tau}-\frac{(n-1)(n-3)}{4}\frac{1}{r^2},
\end{equation}
If $v$ is a radial solution, then we are in the case of the Schr\"odinger equation on $\mathbb{R}^n$ with potential $V$
\begin{equation}\label{eqV}
\left\{\begin{array}{c}
i\d_t v+\Delta_{\mathbb{R}^n} v-V(r)v=g,\\
v(0)=v_0=\frac{u_0}{\sigma}.
\end{array}\right.
\end{equation}
Therefore we get easily the following lemma.
\begin{lemma}\label{locgen}
If $V(r)$ is a potential such that the radial solutions of \eqref{eqV} enjoys the Strichartz estimates on a time interval $I$, then the radial solutions of equation \eqref{eq:ls} satisfy the weighted Strichartz estimates
\begin{equation}\label{wlocgen}
\left\|u\,\sigma^{-\left(1-\frac{2}{q_1}\right)}\right\|_{L^{p_1}(I,L^{q_1}(M))}\leq c\left\|u_0\right\|_{L^2(M)}+\left\|f\,\sigma^{\left(1-\frac{2}{q_2}\right)}\right\|_{L^{p_2'}(I,L^{q_2'}(M))},
\end{equation}
for all n-admissible couples $(p_i,q_i)$. Moreover, if $d>n$ is such that    
\begin{equation}
\tag{I1} \int_0^\infty \sigma^{\frac{2d}{d-n}}\phi^{n-1}\,dr <+\infty,
\end{equation}
then $u$ satisfies to all $d$-admissible Strichartz estimates on $I$. 
\end{lemma}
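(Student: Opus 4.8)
The plan is to prove the two conclusions separately, in both cases reducing everything to the change of unknown $u=\sigma v$, $f=\sigma g$ already performed above, which identifies radial solutions of \eqref{eq:ls} on $M$ with radial solutions of \eqref{eqV} on $\RR^n$. For the first conclusion, all that is left is to match norms. Since the volume element on $M$ is $\phi^{n-1}\,dr$ (times the surface measure on $\mathbb{S}^{n-1}$) while on $\RR^n$ it is $r^{n-1}\,dr$, for any radial $w$ on $M$ and any $q$ one has
\[ \left\|w\,\sigma^{-\left(1-\frac 2q\right)}\right\|_{L^q(M)}^q=c_n\int_0^\infty |w|^q\,\sigma^{2-q}\,\phi^{n-1}\,dr . \]
Substituting $\sigma^{2-q}=(r/\phi)^{(n-1)(2-q)/2}$ and collecting the powers of $r$ and $\phi$ turns the right-hand side into $c_n\int_0^\infty |w/\sigma|^q\,r^{n-1}\,dr=\|w/\sigma\|^q_{L^q(\RR^n)}$. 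Taking $q=2$, where the weight disappears, gives $\|u_0\|_{L^2(M)}=\|v_0\|_{L^2(\RR^n)}$; and running the same computation with the conjugate exponent, where the identity $1/q_2+1/q_2'=1$ makes the powers match again, gives $\|f\,\sigma^{1-2/q_2}\|_{L^{q_2'}(M)}=\|g\|_{L^{q_2'}(\RR^n)}$. Since $\sigma$ does not depend on $t$, taking $L^{p}$ norms in time commutes with all of this, so the assumed Strichartz estimates for \eqref{eqV} on $I$ become exactly \eqref{wlocgen}.

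For the second conclusion I would derive the unweighted $d$-admissible estimates from the weighted $n$-admissible ones \eqref{wlocgen} by H\"older's inequality in the space variable. To a $d$-admissible output couple $(p_1,q_1)$ I associate the couple $(p_1,q_1^*)$ with the same time exponent and $\frac 2{p_1}+\frac n{q_1^*}=\frac n2$; comparing this with $\frac 2{p_1}+\frac d{q_1}=\frac d2$ shows $q_1^*\ge q_1\ge 2$ and $q_1^*<\infty$, so $(p_1,q_1^*)$ is a genuine $n$-admissible couple (the forbidden endpoint is impossible for $n\ge 3$). Then, pointwise in $t$, H\"older with $\frac 1{q_1}=\frac 1{q_1^*}+\frac 1s$ gives
\[ \|u\|_{L^{q_1}(M)}\le \left\|u\,\sigma^{-\left(1-\frac 2{q_1^*}\right)}\right\|_{L^{q_1^*}(M)}\left\|\sigma^{1-\frac 2{q_1^*}}\right\|_{L^s(M)}, \]
and a short computation with the two admissibility relations yields $s\left(1-\frac 2{q_1^*}\right)=\frac{2d}{d-n}$, so the last factor equals $\big(c_n\int_0^\infty\sigma^{2d/(d-n)}\phi^{n-1}\,dr\big)^{1/s}$, which is finite by (I1). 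Symmetrically, for a $d$-admissible input couple $(p_2,q_2)$ and the associated $n$-admissible $(p_2,q_2^*)$, H\"older gives $\|f\,\sigma^{1-2/q_2^*}\|_{L^{(q_2^*)'}(M)}\le \|f\|_{L^{q_2'}(M)}\,\|\sigma^{1-2/q_2^*}\|_{L^{s'}(M)}$ with again $s'\left(1-\frac 2{q_2^*}\right)=\frac{2d}{d-n}$, finite by (I1). Feeding \eqref{wlocgen} for the couples $(p_1,q_1^*)$, $(p_2,q_2^*)$ into these two bounds and taking $L^p$ norms in time produces all $d$-admissible Strichartz estimates for $u$ on $I$.

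I expect the only real point requiring care to be the exponent bookkeeping of the second part: checking that the H\"older exponent $s$ forced by passing from $(p_1,q_1)$ to the intermediate couple $(p_1,q_1^*)$ (and likewise $s'$ on the dual side) produces exactly the power $\frac{2d}{d-n}$ that appears in (I1). Everything else — the reduction to \eqref{eqV}, the norm identities of the first part, and the verification that the intermediate couples are admissible and that $q_1^*,q_2^*<\infty$ when $p_j\ge 2$ and $n\ge 3$ — is routine.
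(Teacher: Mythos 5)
Your proof is correct. The first part (the norm identity $\|h/\sigma\|_{L^q(\RR^n)}=\|h\,\sigma^{\frac 2q-1}\|_{L^q(M)}$, applied to $u$, $u_0$ and, with the conjugate exponent, to $f$) is exactly the paper's argument. For the second part you take a genuinely different route. The paper only extracts two estimates from the weighted family via H\"older and condition (I1) --- the $d$-endpoint bound $\|u\|_{L^2(I,L^{2d/(d-2)})}$ with source in $L^1L^2$ and in $L^2L^{2d/(d+2)}$ --- then derives the corresponding $L^\infty L^2$ bounds by the mass identity $\frac{d}{dt}\int|u|^2=\im\int f\bar u$, and finally obtains all $d$-admissible estimates by interpolation between these four. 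You instead handle each pair of $d$-admissible couples directly: to $(p_j,q_j)$ you attach the $n$-admissible couple $(p_j,q_j^*)$ with the same time exponent and apply one H\"older step in space on each side. The key point, which I checked, is your exponent identity: from $\frac 2{p}+\frac n{q^*}=\frac n2$ and $\frac 2{p}+\frac d{q}=\frac d2$ one gets $\frac 1s=\frac 1q-\frac 1{q^*}=\frac 2p\cdot\frac{d-n}{nd}$ and $1-\frac 2{q^*}=\frac 4{np}$, whence $s\left(1-\frac 2{q^*}\right)=\frac{2d}{d-n}$ independently of $p$; the same holds on the dual side. So a single integrability condition (I1) controls every weight factor, and your argument needs neither the mass-conservation step nor interpolation, at the price of invoking the weighted estimate \eqref{wlocgen} for the whole range of $n$-admissible couples rather than just the endpoint. (Your computation also shows, incidentally, that the exponents $\sigma^{\pm 1/n}$ in the paper's endpoint H\"older step should read $\sigma^{\pm 2/n}$ for consistency with the exponent $\frac{2d}{d-n}$ in (I1); this is a harmless typo there.) Your checks that $2\le q_j\le q_j^*<\infty$ for $p_j\ge 2$, $n\ge 3$ are the right ones and complete the argument.
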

\begin{proof}
We write the Strichartz estimates for $v$ in terms of $u$
$$\left\|\frac{u}{\sigma}\right\|_{L^{p_1}(I,L^{q_1}(\mathbb{R}^n))}\leq C\left\|\frac{u_0}{\sigma}\right\|_{L^2(\mathbb{R}^n)}+C\left\|\frac f\sigma\right\|_{L^{p_2'}(I,L^{q_2'}(\mathbb{R}^n))}.$$
Knowing that the volume element on $M$ is $\phi^{n-1}dr$, for a general function $h$,
$$\left\|\frac{h}{\sigma}\right\|_{L^q(\mathbb{R}^n)}^q=\int_0^\infty \left|\frac{h}{\sigma}\right|^qr^{n-1}dr=\int_0^\infty|h|^q\sigma^{2-q}\,\phi^{n-1}dr=
\left\|h\,\sigma^{\frac{2}{q}-1}\right\|_{L^q(M)}^q,$$%=\left\|h\left(\frac{\phi}{r}\right)^{\frac{n-1}{2}\left(1-\frac{2}{q}\right)}\right\|_{L^q(M)}^q,$$
and therefore the first assertion of the lemma is proved. 
For ending with some d-admissible couples Strichartz estimates, $d>n$ we shall use the weight in space, as in \cite{BaCaSt06}.  We compute by H\"older's inequality the $d$-endpoint norm
$$\|u\|_{L^2(I,L^{\frac{2d}{d-2}}(M))}\leq C\left\|u\sigma^{-\frac{1}{n}}\right\|_{L^2(I,L^{\frac{2n}{n-2}}(M))}\left\|\sigma^{\frac{1}{n}}\right\|_{L^{\frac{nd}{d-n}}(M)}.$$
The integrability condition $(I1)$ gives us
$$\|u\|_{L^2(I,L^{\frac{2d}{d-2}}(M))}\leq C\left\|u\,\sigma^{-\frac 1n}\right\|_{L^2(I,L^{\frac{2n}{n-2}}(M))}.$$
The weighted estimates \eqref{wlocgen} implies on one hand
\begin{equation}
\label{Strichartzi}
\|u\|_{L^2(I,L^{\frac{2d}{d-2}}(M))}\leq C\left\|u_0\right\|_{L^2(M)}+C\left\|f\right\|_{L^1(I,L^2(M))},
\end{equation}
and on other hand
$$\|u\|_{L^2(I,L^{\frac{2d}{d-2}}(M))}\leq C\left\|u_0\right\|_{L^2(M)}+C\left\|f\,\sigma^\frac{1}{n}\right\|_{L^2(I,L^{\frac{2n}{n+2}}(M))}.$$
By applying H\"older's inequality in the last estimate
$$\|u\|_{L^2(I,L^{\frac{2d}{d-2}}(M))}\leq C\left\|u_0\right\|_{L^2(M)}+C\left\|f\right\|_{L^2(I,L^{\frac{2d}{d+2}}(M))}\left\|\sigma^\frac{1}{n}\right\|_{L^{\frac{nd}{d-n}}(M)},$$
so by using again the integrability condition $(I1)$ we get
\begin{equation}
\label{Strichartzibis}
\|u\|_{L^2(I,L^{\frac{2d}{d-2}}(M))}\leq C\left\|u_0\right\|_{L^2(M)}+C\left\|f\right\|_{L^2(I,L^{\frac{2d}{d+2}}(M))}.
\end{equation}

Now, multiplying \eqref{eq:ls} by $\overline{u}$ and taking the imaginary part we get by H\"older estimate
$$ \|u\|_{L^{\infty}(I,L^2(M))}^2\leq \|u_0\|_{L^2(M)}^2+\|u\|_{L^p(I,L^q(M))}\|f\|_{L^{p'}(I,L^{q'}(M))}.$$
Applying this to $p=\infty$, $q=2$ and $p=2$, $q=\frac{2d}{d-2}$, and using, in this last case, inequality \eqref{Strichartzibis}, we get
\begin{align}
\label{masscons}
\|u\|_{L^{\infty}(I,L^2(M))}&\leq C\|u_0\|_{L^2(M)}+C\|f\|_{L^1(I,L^{2}(M))}\\
\label{Strichartzii}
\|u\|_{L^{\infty}(I,L^2(M))}&\leq C\|u_0\|_{L^2(M)}+C\|f\|_{L^{2}(I,L^{\frac{2d}{d+2}}(M))}.
\end{align}

All $d$-admissible estimates follows now by interpolation from \eqref{Strichartzi}, \eqref{Strichartzibis}, \eqref{masscons} and \eqref{Strichartzii}.
\end{proof}
When the estimates are global in time, we get the following lemma, used already in \cite{BaCaSt06}, which will yield Corollary \ref{corol.scattering}.
\begin{lemma}\label{glgen}Let $d>n$.
We suppose that the global Strichartz estimates without weights hold for $d$-admissible couples. Then short-range wave operators exist for the radial nonlinear equation \eqref{NLS} for all powers $\frac{4}{d}\leq p<\frac{4}{d-2}$. 
\end{lemma}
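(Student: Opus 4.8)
The plan is to run the classical $H^1$–scattering machinery for an energy–subcritical power nonlinearity, the only new feature being that the relevant admissibility threshold is dictated by the effective dimension $d$ rather than by $n$; throughout we stay in the radial class, which is preserved by $e^{it\Delta_M}$ and by $u\mapsto|u|^pu$. Fix $p\in[\tfrac4d,\tfrac4{d-2})$, write $\Delta=\Delta_M$, $N(u)=\mp|u|^pu$, and choose the $d$-admissible couple $(p_1,q_1)$ with $q_1=p+2$, so that $p_1=\frac{4(p+2)}{dp}$. The strict inequality $p<\frac4{d-2}$ forces $p_1>2$ and $q_1<\frac{2d}{d-2}$, i.e. $(p_1,q_1)$ is a non–endpoint $d$-admissible couple; moreover $q_1=p+2<\frac{2n}{n-2}$, hence $H^1(M)\hookrightarrow L^{q_1}(M)$ by the Sobolev embedding on $M$. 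From the assumed global $d$-admissible Strichartz estimates for the free equation \eqref{eq:ls}, commuting the flow with $(-\Delta)^{1/2}$ and using the $L^{q_1}(M)$–boundedness of the Riesz transform (so that $\|w\|_{W^{1,q_1}}\simeq\|w\|_{L^{q_1}}+\|(-\Delta)^{1/2}w\|_{L^{q_1}}$), one upgrades these to the $W^{1,q_1}$–valued homogeneous estimate $\|e^{i\cdot\Delta}w\|_{L^{p_1}(\RR,W^{1,q_1})}\lesssim\|w\|_{H^1}$, and, by duality together with the endpoint couple $(\infty,2)$, to the retarded estimate
$$\left\|\int_t^{+\infty}e^{i(t-s)\Delta}G(s)\,ds\right\|_{L^\infty((T,\infty),H^1)}+\left\|\int_t^{+\infty}e^{i(t-s)\Delta}G(s)\,ds\right\|_{L^{p_1}((T,\infty),W^{1,q_1})}\lesssim\|G\|_{L^{p_1'}((T,\infty),W^{1,q_1'})}.$$

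Next I construct the wave operator. Given $\tilde u_0\in H^1(M)$, I seek a fixed point on $(T,\infty)$, $T\gg1$, of $\Phi(u)(t)=e^{it\Delta}\tilde u_0+i\int_t^{+\infty}e^{i(t-s)\Delta}N(u)(s)\,ds$, in a suitable ball of $X_T:=L^\infty((T,\infty),H^1)\cap L^{p_1}((T,\infty),W^{1,q_1})$, metrized by $\|u-v\|_{L^{p_1}((T,\infty),L^{q_1})}$ (the $L^\infty H^1$–radius of the ball being of size $\sim\|\tilde u_0\|_{H^1}$, the other radii small). The nonlinear input is: by Hölder in space, $\|N(u)\|_{W^{1,q_1'}}\lesssim\|u\|_{L^{q_1}}^{p}\,\|u\|_{W^{1,q_1}}$, using $(p+1)q_1'=q_1$ and $\frac p{q_1}+\frac1{q_1}=\frac1{q_1'}$; and then by Hölder in time, with $a=\frac{p_1}{p_1-2}$, $\|N(u)\|_{L^{p_1'}_tW^{1,q_1'}}\lesssim\|u\|_{L^{pa}_tL^{q_1}}^{p}\,\|u\|_{L^{p_1}_tW^{1,q_1}}$. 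The hypothesis $p\ge\frac4d$ is \emph{exactly} the inequality $pa\ge p_1$, which permits the interpolation $\|u\|_{L^{pa}((T,\infty),L^{q_1})}\le\|u\|_{L^{p_1}((T,\infty),L^{q_1})}^{\theta}\,\|u\|_{L^\infty((T,\infty),H^1)}^{1-\theta}$ with $\theta=(p_1-2)/p\in(0,1]$. Since $\|e^{i\cdot\Delta}\tilde u_0\|_{L^{p_1}((T,\infty),W^{1,q_1})}\to0$ as $T\to+\infty$ (tail of a finite quantity), $\Phi$ is a contraction on such a ball once $T$ is large; the endpoint $p=\frac4d$ is included here too (then $\theta=1$, no integrable time decay is available, and the contraction rests solely on the smallness of that free Strichartz tail, propagated along the iteration). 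The difference estimate only needs $\|N(u)-N(v)\|_{L^{q_1'}}\lesssim(\|u\|_{L^{q_1}}^p+\|v\|_{L^{q_1}}^p)\|u-v\|_{L^{q_1}}$, valid for every $p>0$. The fixed point $u$ solves \eqref{NLS} on $(T,\infty)$ with $\|u(t)-e^{it\Delta}\tilde u_0\|_{H^1}\lesssim\|N(u)\|_{L^{p_1'}((t,\infty),W^{1,q_1'})}\to0$; extending it backwards by the local $H^1$ Cauchy theory for \eqref{NLS} (available since $p<\frac4{n-2}$), and globally in the defocusing case by conservation of mass and energy, defines the wave operator $\tilde u_0\mapsto u(0)$ on $H^1(M)$.

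For asymptotic completeness (defocusing equation, or small data) every solution is global with $\sup_t\|u(t)\|_{H^1}<\infty$; bootstrapping the nonlinear estimate above over a finite partition of $\RR$ into intervals on which the relevant free Strichartz norm lies below a fixed threshold yields the global spacetime bound $\|u\|_{L^{p_1}(\RR,W^{1,q_1})}<\infty$; then $e^{-it\Delta}u(t)$ is Cauchy in $H^1$ as $t\to+\infty$ because $\|e^{-is\Delta}u(s)-e^{-it\Delta}u(t)\|_{H^1}\lesssim\|N(u)\|_{L^{p_1'}((s,t),W^{1,q_1'})}$, and its limit $\tilde u_0$ satisfies $\|u(t)-e^{it\Delta}\tilde u_0\|_{H^1}\to0$. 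Feeding into this scheme the global $d$-admissible estimates supplied by Lemma \ref{locgen} (with $I=\RR$) for the manifolds of Theorem \ref{theo.poly}, for a range of $d\in(n,N)$, then yields Corollary \ref{corol.scattering}. I expect the main obstacle to be the endpoint $p=\frac4d$, where both the contraction and the completeness bootstrap must be driven entirely by the vanishing of free Strichartz tails rather than by any time decay; a secondary technical point is the passage from the scalar Strichartz estimates to the $H^1$–level ones, which uses the commutation with $(-\Delta)^{1/2}$, the $L^{q_1}(M)$–boundedness of the Riesz transform, and the Sobolev embedding on $M$.
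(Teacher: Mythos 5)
Your argument is correct and is exactly the classical construction that the paper invokes by citation: its proof of Lemma \ref{glgen} consists of the single observation that the short-range wave operators follow from the global $d$-admissible Strichartz estimates together with H\"older and Sobolev embeddings (referring to \cite[\S 5]{Gi97BO}), and you have supplied precisely those details, correctly identifying the key point that the hypothesis $p\geq 4/d$ is equivalent to $\theta=(p_1-2)/p\leq 1$ in the time interpolation, so that the contraction at infinity is driven by the vanishing free Strichartz tail. The one caveat is that your closing paragraph on asymptotic completeness for large defocusing data is not justified by Strichartz estimates alone (the finite partition of $\RR$ presupposes the global spacetime bound one is trying to prove), but this is not needed for the lemma as stated, which — like the paper's use of it — only requires the existence of the wave operators; your flagged Riesz-transform/Sobolev points on $M$ are likewise glossed over at the same level in the paper itself.
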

\begin{proof}
If $d$ is an integer, one can obtain the short-range wave operators on $\mathbb{R}^d$ for 
$$i\d_t u+\Delta_{\mathbb{R}^d} u\pm|u|^{p}u=0,$$
for all powers $\frac{4}{d}\leq p<\frac{4}{d-2}$, just by using the global $d$-admissible Strichartz estimates, H\"older and Sobolev embeddings (\cite[\S 5]{Gi97BO}). In our case we have all these ingredients (even when $d$ is not an integer), and the lemma follows.
\end{proof}
We end this subsection with a lemma on the wave equation.
\begin{lemma}\label{waves}
We suppose that $\frac{\phi'}{\phi}$ is bounded at infinity. 
If $V(r)$ is a potential such that the radial solutions of the associated wave equation on $\mathbb{R}^n$ enjoy the Strichartz estimates, then the radial solutions of
$$\left\{\begin{array}{l} 
\d_t^2 u-\Delta_{M}u=f,\\
\d_tu(0)=u_1,\;u(0)=u_0
\end{array}\right.
$$
satisfy the weighted Strichartz estimates
\begin{equation}\label{w}
\left\|u\,\sigma^{-\frac 12\left(1-\frac{2}{q_1}\right)}\right\|_{L^{p_1}(I,L^{q_1}(M))}\leq C\left\|u_0\right\|_{H^1(M)}+C\left\|u_1\right\|_{L^2(M)}+C\left\|f\,\sigma^{\frac{1}{2}\left(1-\frac{2}{q_1}\right)}\right\|_{L^{p_2'}(I,L^{q_2'}(M))},
\end{equation}
for all wave n-admissible couples $(p_i,q_i)$.
\end{lemma}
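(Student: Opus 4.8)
The plan is to repeat the reduction used for Lemma \ref{locgen}, noting that the change of unknown there is purely spatial and time-independent, hence applies verbatim to the wave equation. With $\sigma(r)=(r/\phi(r))^{(n-1)/2}$, set $u=\sigma v$ and $f=\sigma g$; since the computations of \S\ref{gen} only used the action of $\Delta_M$ on radial functions, they show that for radial $u$ the equation $\partial_t^2 u-\Delta_M u=f$ is equivalent to the wave equation with potential
$$\partial_t^2 v-\Delta_{\RR^n}v+V(r)\,v=g,\qquad v(0)=\frac{u_0}{\sigma},\quad \partial_t v(0)=\frac{u_1}{\sigma},$$
on $\RR^n$, with $V$ the potential \eqref{V}. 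By hypothesis the radial solutions of this last equation satisfy, for all wave $n$-admissible couples $(p_i,q_i)$, the Strichartz estimate
$$\|v\|_{L^{p_1}(I,L^{q_1}(\RR^n))}\le C\Big(\|v(0)\|_{\dot H^1(\RR^n)}+\|\partial_t v(0)\|_{L^2(\RR^n)}+\|g\|_{L^{p_2'}(I,L^{q_2'}(\RR^n))}\Big).$$

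It then remains to transfer every norm from $\RR^n$ to $M$. As in the proof of Lemma \ref{locgen}, the identity $r^{n-1}\,dr=\sigma^2\phi^{n-1}\,dr$ yields $\|h/\sigma\|_{L^q(\RR^n)}=\|h\,\sigma^{2/q-1}\|_{L^q(M)}$ for any $h$; applied with $h=u$ and $h=f$ this turns the space-time norms of $v$ and $g$ into the weighted norms of $u$ and $f$ appearing in \eqref{w}, while for $h=u_1$ and $q=2$ the weight is trivial, so $\|\partial_t v(0)\|_{L^2(\RR^n)}=\|u_1\|_{L^2(M)}$. The only point not already contained in Lemma \ref{locgen} is the control of $\|v(0)\|_{\dot H^1(\RR^n)}$ by $\|u_0\|_{H^1(M)}$: radially $\partial_r(u_0/\sigma)=\sigma^{-1}\big(\partial_r u_0-\tfrac{\sigma'}{\sigma}u_0\big)$, so using the previous identity with $q=2$,
$$\|\partial_r v(0)\|_{L^2(\RR^n)}=\Big\|\partial_r u_0-\tfrac{\sigma'}{\sigma}\,u_0\Big\|_{L^2(M)}\le \|\partial_r u_0\|_{L^2(M)}+\Big\|\tfrac{\sigma'}{\sigma}\,u_0\Big\|_{L^2(M)}.$$
By \eqref{difflog}, $\sigma'/\sigma=\tfrac{n-1}{2}\big(\tfrac1r-\tfrac{\phi'}{\phi}\big)$, which is bounded near $r=0$ (since $\phi(r)=r+O(r^3)$ gives $\phi'/\phi=1/r+O(r)$) and bounded for large $r$ precisely because $\phi'/\phi$ is assumed bounded at infinity. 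Hence $\sigma'/\sigma\in L^\infty$, and since for a radial function the $M$-gradient reduces to $\partial_r$, one gets $\|v(0)\|_{\dot H^1(\RR^n)}\lesssim\|u_0\|_{H^1(M)}$. Combining these transfers gives \eqref{w}.

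As for the difficulty: there is no deep obstacle. The statement is conditional on the Euclidean wave Strichartz estimates for the potential $V$, so nothing about $V$ itself needs to be verified, and the spatial reduction is exactly the one already used for the Schr\"odinger equation. The one step requiring a little care — and the sole reason for the extra hypothesis that $\phi'/\phi$ be bounded at infinity — is the transfer of the $\dot H^1$ norm of the initial data, which needs $\sigma'/\sigma\in L^\infty$; everything else is the weight bookkeeping already carried out in the proof of Lemma \ref{locgen}.
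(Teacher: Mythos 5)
Your proof is correct and follows essentially the same route as the paper: the time-independent change of unknown $u=\sigma v$ reduces the problem to the Euclidean wave equation with potential $V$, and the only genuinely new step beyond Lemma \ref{locgen} is controlling $\|u_0/\sigma\|_{\dot H^1(\RR^n)}$ by $\|u_0\|_{H^1(M)}$ via the boundedness of $\sigma'/\sigma=\frac{n-1}{2}\bigl(\frac1r-\frac{\phi'}{\phi}\bigr)$, which is exactly the argument in the paper (bounded near $0$ by the Taylor expansion of $\phi$, bounded at infinity by the hypothesis on $\phi'/\phi$).
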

\begin{proof}
The proof follows as the one of Lemma \ref{locgen}, the only difference being estimating the homogeneous $H^1$ norm of $u_0/\sigma$ on $\mathbb{R}^n$ in terms of the $H^1$ norm of $u_0$ on $M$. We have
$$\int_0^\infty \left|\nabla\left(\frac {u_0}{\sigma}\right)\right|^2r^{n-1}dr=\int_0^\infty \left|\frac{\d_ru_0}{\sigma}-\frac{u_0\sigma'}{\sigma^2}\right|^2r^{n-1}dr=\int_0^\infty \left|\d_ru_0-u_0\frac{\sigma'}{\sigma}\right|^2\phi^{n-1}dr=$$
$$=\int_0^\infty \left|\d_ru_0-u_02(n-1)\left(\frac 1r-\frac{\phi'}{\phi}\right)\right|^2\phi^{n-1}dr\leq \|u_0\|_{H^1(M)}+c\left\|u_0\left(\frac 1r-\frac{\phi'}{\phi}\right)\right\|_{L^2(M)}.$$
We get the boundeness of $\frac 1r-\frac{\phi'}{\phi}$ by the conditions on $\phi$ at zero, 
$$\frac 1r-\frac{\phi'}{\phi}=\frac{\phi-r\phi'}{r\phi}=\frac{r+o(r^2)-r(1+o(r))}{r(r+o(r^2))},$$
by the positivity of $\phi$ outside zero, and by the boundeness of $\frac{\phi'}{\phi}$ at infinity.
\end{proof}
\begin{remark}
We have used dispersive results on the wave equation on $\RR^n$ with potential to get informations about the free wave equation on manifolds. This was already done the other way around in \cite{Ta01}, where the author first derives weighted estimates for the wave equation on the hyperbolic space $\mathbb{H}^n$ (which is related to the symbol of the wave operator), and then, by a change of functions, gets estimates for the wave equation on $\mathbb{R}^n$(see also \cite{Ge00BO}). 
\end{remark}
%. which are used in the study of the nonlinear equation 

\subsection{Local in time improvements}\label{local}
Let us prove Proposition \ref{prop.local}. In view of Lemma \ref{locgen}, to get Proposition \ref{prop.local} we need to show local in time Strichartz estimates for \eqref{eqV}. We will show that $V$ is bounded. By assumption \eqref{secbounded}, $\frac{\phi''}{\phi}$ and $\frac{1}{\phi}$ are  bounded by $m$. Let us check that $\frac{\phi'}{\phi}$ is bounded for $r\geq 1$. We have
$$\frac{d}{dr}\left(\frac{\phi'}{\phi}\right)=\frac{\phi''}{\phi}-\left(\frac{\phi'}{\phi}\right)^2\leq m-\left(\frac{\phi'}{\phi}\right)^2.$$
Thus if for some $r_1>0$, $\frac{\phi'(r_1)}{\phi(r_1)}\leq-\sqrt{m}$, then $\frac{\phi'(r)}{\phi(r)}\leq -\sqrt{m}$ for all $r\geq r_1$. Hence
$$ \forall r\geq r_1,\quad \phi(r)\leq \phi(r_1)e^{-\sqrt{m}(r-r_1)},$$
contradicting the fact that $\frac{1}{\phi}$ is bounded for $r\geq 1$. Thus $\frac{\phi'}{\phi}$ is bounded from below for $r\geq 1$. 
 
To show that $\frac{\phi'}{\phi}$ is bounded from above for large $r$, write
$$ \frac{d}{dr}\left(e^{\sqrt{m}r}\phi'-\sqrt{m}e^{\sqrt{m}r}\phi\right)=e^{\sqrt{m}r}\phi''-me^{\sqrt{m}r}\phi\leq 0.$$
As $\phi(0)=0$ and $\phi'(0)=1$, we get $e^{\sqrt{m}r}\phi'(r)\leq \sqrt{m}e^{\sqrt{m}r}\phi(r)+1$. From the fact that $\frac{1}{\phi}$ is bounded for $r\geq 1$, we obtain that $\frac{\phi'}{\phi}$ is bounded from above. As a conclusion $|V|\leq C\left[\frac{1}{r^2}+\frac{|\phi''|}{\phi}+\left(\frac{\phi'}{\phi}\right)^2\right]$ is bounded for $r\geq 1$.

By \eqref{expressionV2}, near $r=0$, it is sufficient to show that  $\frac{(\phi')^2}{\phi^2}-\frac{1}{r^2}$ and $\frac{\phi''}{\phi}$ are bounded.  
Since $\phi'(0)=1$ and $\phi^{(even)}(0)=0$, we have at zero:
\begin{equation}
\label{DL1}
\frac{\phi''}{\phi}=\frac{\phi'''(0)r+o(r^2)}{r+o(r)}=\phi'''(0)+o(r).
\end{equation}
and
\begin{align}
\notag
\frac{(\phi')^2}{\phi^2}-\frac{1}{r^2}&=\frac{\left(1+\frac{\phi'''(0)}{2}r^2+o(r^3)\right)^2}{r^2\left(1+\frac{\phi'''(0)}{6}r^2+o(r^3)\right)^2}-\frac{1}{r^2}\\
\label{DL2}
\frac{(\phi')^2}{\phi^2}-\frac{1}{r^2}&=\frac{1}{r^2}\left(1+\phi'''(0)r^2+o(r^3)\right)\left(1-\frac{\phi'''(0)}{3}r^2+o(r^3)\right)-\frac{1}{r^2}=\frac{2}{3}\phi'''(0)+o(r),
\end{align}
%$$\frac{1}{\phi^2}-\frac{1}{r^2}=\frac{r^2-\left(r+\frac{\phi'''(0)}{6}r^3+o(r^3)\right)^2}{r^2(r+o(r))^2}=-\frac{\phi'''(0)+o(r)}{3+o(r)},$$
so we get boundeness for $r\leq 1$. Therefore the potential $V$ is bounded.

% By the Strichartz estimates for the free evolution, we get for any $n$-admissible couples $(p_1,q_1)$, $(p_2,q_2)$,
% \begin{equation}
% \label{StrichartzLoc1}
% \|v\|_{L^{p_1}((0,T),L^{q_1})}\leq C\left(\|v_0\|_{L^2}+\|g\|_{L^{p_2}((0,T),L^{q_2})}+\|Vv\|_{L^{1}((0,T),L^{2})}\right).
% \end{equation}
% Thus
% $$\|v\|_{L^{p_1}((0,T),L^{q_1})}\leq C\left(\|v_0\|_{L^2}+\|g\|_{L^{1}L^{2}}+T\|V\|_{L^{\infty}}\|v\|_{L^{\infty}((0,T),L^2)}\right).$$
% By the classical bound $\|v\|_{L^{\infty}((0,T),L^2)}\leq \|v_0\|_{L^2}+\|g\|_{L^1((0,T),L^2)}$.
% local-in-time Strichartz estimate \eqref{wlocstrichartz} 
% %$$\|v\|\|_{L^{p_1}((0,T),L^{q_1})}\leq C(T)\left(\|v_0\|_{L^2}+\|g\|_{L^1((0,T),L^2)}\right),$$
It is classical and easy to check that for such potential, local in time Strichartz estimates hold (see e.g. \cite[Theorem 1.1]{DAPiVi05}), which concludes the proof of Proposition \ref{prop.local}.
\qed

\begin{remark}\label{quadra}
Let us notice that \eqref{wlocstrichartz} still holds for all $M$ such that $V$ is subquadratic, with additional assumptions on the derivatives of $V$ : this is a consequence of the local dispersion proved by Fujiwara in \cite{Fu79} (see also \cite{Ca06} for the linear growth level). In the case when the potential is super-quadratic, local in time Strichartz estimates are only known with loss of derivative (\cite{YaZh04}, \cite{RoZu06}). There are simple examples of functions $\phi$ giving such potentials, for instance if $n=3$, $\phi(r)=r$ near the origin and $\phi(r)=e^{r^k}$ at infinity, for some $k>1$. This  yields smooth rotationally symmetric manifolds with negative sectional curvature, and with a volume density increasing very fast at infinity. In this case the potential 
\begin{equation}\label{expV}
V=\frac{\phi''}{\phi}=k(k-1)r^{k-2}+k^2r^{k-1}
\end{equation}
is growing at infinity, subquadratic if $k\in(1,2]$ and super-quadratic if $k>2$. In view of the results of \cite{YaZh04} and \cite{RoZu06}, we do not expect the local in time weighted Strichartz estimates \eqref{wlocstrichartz} to hold.\par
Notice that if $V(r)$ tend to infinity as $r$ tends to infinity, the operator $-\Delta+V$ has eigenvalues (see \S3.3 of \cite{BeShBO}), so that the global Strichartz estimates cannot hold for \eqref{LS}, and implicitely \eqref{wlocstrichartz} cannot hold globally in time on the corresponding manifold. Also, in this case the operator $-\Delta_M$ has eigenvalues, thus one cannot expect global Strichartz estimates without a bound on $V$ at infinity.
\end{remark}

\subsection{Global in time improvements}\label{globalimpr}
Assuming Theorem \ref{theo.dispersive}, we will show from Theorem \ref{theo.dispersive} the following general result. We will then prove Theorems \ref{theo.poly}, and state and prove Theorem \ref{theo.exp'}.
\begin{prop}\label{prop.manif}
Let $M$ be a rotationally symmetric manifold of dimension $n\geq 3$. Assume that there exists $c_0\in\RR$, $\delta_0>0$ such that  
\begin{gather}
\label{boundtau''}
\left|\frac{\tau''}{\tau}-c_0\right|\leq \frac{C}{r^2},\quad \forall r>0,\\
\label{tau''>0}
r^2\left(\frac{\tau''}{\tau}-c_0\right)\geq \delta_0-\frac{1}{4},\quad \forall r>0, \\
\label{tau'''<0}
\exists R>0,\; r\geq R\Longrightarrow -r^2\frac{d}{dr}\left(r\left(\frac{\tau''}{\tau}-c_0\right)\right)\geq \delta_0-\frac{1}{4},
\end{gather}
Then the radial solutions of the free equation \eqref{eq:ls} satisfy for all n-admissible couples $(p_j,q_j)$ the weighted Strichartz estimate \eqref{wstrichartz}.
\end{prop}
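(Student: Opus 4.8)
The plan is to conjugate \eqref{eq:ls} on $M$ into the Schr\"odinger equation with potential on $\mathbb{R}^n$ and then invoke Theorem \ref{theo.dispersive}. By the computation carried out in \S\ref{gen}, writing $u=\sigma v$ with $\sigma=(r/\phi)^{(n-1)/2}$, a radial solution $u$ of \eqref{eq:ls} corresponds to a radial solution $v$ of \eqref{eqV} with potential $V=\tau''/\tau-(n-1)(n-3)/(4r^2)$, $\tau=\phi^{(n-1)/2}$, and conversely. By Lemma \ref{locgen} with $I=\mathbb{R}$, the weighted estimate \eqref{wstrichartz} follows as soon as the radial solutions of \eqref{eqV} satisfy the unweighted Strichartz estimates on $\mathbb{R}$; since $\sigma^{-1}=(\phi/r)^{(n-1)/2}$, the weights in \eqref{wlocgen} are exactly those of \eqref{wstrichartz}. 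So the whole proposition reduces to proving Strichartz estimates for \eqref{eqV}.

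First I would set $\widetilde V:=V-c_0$ and verify the hypotheses of Theorem \ref{theo.dispersive} for $\widetilde V$. Boundedness and $C^1$ (indeed $C^\infty$) regularity of $V$ near the origin are already guaranteed by $\phi'(0)=1$ and $\phi^{(even)}(0)=0$ (cf.\ \S\ref{gen} and the Taylor expansions \eqref{DL1}--\eqref{DL2}); for large $r$ one has $|\widetilde V|\le|\tau''/\tau-c_0|+Cr^{-2}\le Cr^{-2}$ by \eqref{boundtau''}, so \eqref{HypBound} holds. The arithmetic heart of the matter is the identity $(n-2)^2-(n-1)(n-3)=1$. Using it, $\left(\frac n2-1\right)^2+r^2\widetilde V=\frac14+r^2\left(\frac{\tau''}{\tau}-c_0\right)$, so \eqref{tau''>0} is precisely \eqref{HypPositive} with the same $\delta_0$; and since $r\widetilde V=r\left(\frac{\tau''}{\tau}-c_0\right)-\frac{(n-1)(n-3)}{4r}$ gives $r^2\partial_r(r\widetilde V)=r^2\frac{d}{dr}\!\left(r\left(\frac{\tau''}{\tau}-c_0\right)\right)+\frac{(n-1)(n-3)}{4}$, one obtains $\left(\frac n2-1\right)^2-r^2\partial_r(r\widetilde V)=\frac14-r^2\frac{d}{dr}\!\left(r\left(\frac{\tau''}{\tau}-c_0\right)\right)$, so \eqref{tau'''<0} is exactly \eqref{HypRepulsive} (and the ranges of validity, $r>0$ resp.\ $r\ge R$, match those of \eqref{HypPositive} resp.\ \eqref{HypRepulsive}).

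It then remains to combine Theorem \ref{theo.dispersive} with Remark \ref{rem.constant}: the additive constant $c_0$ is absorbed by replacing $v$ with $e^{ic_0t}v$, so the Strichartz estimates \eqref{Strichartz} hold for \eqref{eqV}, and Lemma \ref{locgen} then yields \eqref{wstrichartz}. I do not expect a serious obstacle here: the statement is essentially a dictionary translation of Theorem \ref{theo.dispersive} through the conjugation of \S\ref{gen}, and the only delicate points are the bookkeeping of the correction term $-(n-1)(n-3)/(4r^2)$ in $V$ — encapsulated in the identity $(n-2)^2-(n-1)(n-3)=1$ — and the role of the constant $c_0$, which is precisely what Remark \ref{rem.constant} takes care of.
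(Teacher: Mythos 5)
Your proof is correct and follows exactly the paper's route: reduce via the conjugation of \S\ref{gen} and Lemma \ref{locgen} to Strichartz estimates for \eqref{eqV}, then check that $V-c_0$ satisfies \eqref{HypBound}--\eqref{HypRepulsive} and invoke Theorem \ref{theo.dispersive} together with Remark \ref{rem.constant}. You are in fact more explicit than the paper, which simply asserts that \eqref{boundtau''}--\eqref{tau'''<0} coincide with \eqref{HypBound}--\eqref{HypRepulsive} for $V-c_0$, whereas you spell out the identity $(n-2)^2-(n-1)(n-3)=1$ that makes the translation work.
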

Note that \eqref{boundtau''} implies that $\tau$ grows polynomially (when $c_0=0$) or exponentially (when $c_0>0$) at infinity, which explains our restrictions on the growth of $\phi$ in Theorems \ref{theo.poly} and \ref{theo.exp'}. In our applications to manifolds, we ignored the case $c_0<0$, which would impose an exponential decay of $\phi$ at infinity.
\begin{proof}[Proof of Proposition \ref{prop.manif}]
In view of Lemma \ref{locgen} and Remark \ref{rem.constant}, it is enough to show that the potential $V-c_0$ satisfies the assumptions of Theorem \ref{theo.dispersive}, where $V$ is the potential defined in \eqref{expressionV2}. Since $\phi$ is $\mathcal{C}^\infty$ and positive for $r\geq 0$, $V$ is $C^{\infty}$ outside $0$. 
By \eqref{DL1} and \eqref{DL2}, noting that the $o(r)$ in these developments are also $C^1$ functions, $\frac{\phi''}{\phi}$ and $\frac{(\phi')^2}{\phi^2}-\frac{1}{r^2}$ are $C^1$ near $0$.
%$$(\frac{\phi''}{\phi})'=\frac{\phi'''\phi-\phi''\phi'}{\phi^2}=\frac{(\phi'''(0)+o(r))(r+o(r^2))-(\phi'''(0)r+o(r^2))(1+o(r))}{(r+o(r))^2}.$$
Thus $V$ is $C^1$ on $\RR^n$.

Conditions  \eqref{boundtau''}-\eqref{tau'''<0} are exactly \eqref{HypBound}-\eqref{HypRepulsive} for $V-c_0$. Thus all assumptions of Theorem \ref{theo.dispersive} hold, which shows that the Strichartz estimates hold for \eqref{LS}. Proposition \ref{prop.manif} follows from Lemma \ref{locgen}.
\end{proof}

We now turn to the proof of the global estimates in the polynomial case.
\begin{proof}[Proof of Theorem \ref{theo.poly}]
In this case, we will use Proposition \ref{prop.manif} with $c_0=0$. 
We first check \eqref{tau''>0}. We have
$$\frac{\tau''}{\tau}=\frac{n-1}{2}\frac{\phi''}{\phi}+\frac{(n-1)(n-3)}{4}\,\left(\frac{\phi'}{\phi}\right)^2.$$
By assumption \eqref{negativecurvature}, $\frac{\phi''}{\phi}\geq \left(\delta_0-\frac{1}{2(n-1)}\right)\frac{1}{r^2}$, which gives \eqref{tau''>0}.

By assumption \eqref{polynomialbehaviour}, $\phi(r)=Ar^m+o_3\left(r^m\right),\;r\rightarrow +\infty$, where $A>0$ and the notation $o_l\left(r^k\right)$ is defined just before Theorem \ref{theo.poly}. Thus
$$ \tau=\phi^{\frac{n-1}{2}}=\left(Ar^m+o_3\left(r^m\right)\right)^{\frac{n-1}{2}}=A^{\frac{n-1}{2}}r^{\frac{m(n-1)}{2}}\left(1+o_3\left(r^0\right)\right)^{\frac{n-1}{2}},\quad r\rightarrow +\infty.$$
By the formula $(1+u)^{\frac{n-1}{2}}=1+u\int_0^{1}\frac{n-1}{2}(1+us)^{\frac{n-3}{2}}ds$, we get that 
$\left(1+o_3\left(r^0\right)\right)^{\frac{n-1}{2}}=1+o_3(r^0)$. Let $N=m(n-1)+1$. We have
$$ \tau=A^{\frac{n-1}{2}}r^{\frac{N-1}{2}}+o_3\left(r^{\frac{N-1}{2}}\right),\quad \tau''=A^{\frac{n-1}{2}}\frac{(N-1)(N-3)}{4}r^{\frac{N-5}{2}}+o_1\left(r^{\frac{N-5}{2}}\right).$$
Finally, we get
$$ \frac{\tau''}{\tau}=\frac{(N-1)(N-3)}{4r^2}+o_1\left(r^{-2}\right),\quad r\rightarrow +\infty.$$
Thus $\frac{|\tau''|}{\tau}\leq \frac{C}{r^2}$ for large $r$, which yields, together with the boundness at the origin, the estimate \eqref{boundtau''}. Finally
$$-r^2\frac{d}{dr}\left(\frac{r\tau''}{\tau}\right)-r^2c_0=\frac{(N-1)(N-3)}{4}+o(1),$$ 
As $m>\frac{1}{n-1}$, $N>2$ and thus $\frac{(N-1)(N-3)}{4}>-\frac{1}{4}$. Hence \eqref{tau'''<0}. By Proposition \ref{prop.manif}, the weighted Strichartz estimates \eqref{wstrichartz} hold. 

It remains to check that if $m>1$ (and thus $N>n$), all classical Strichartz estimate hold for $d$-admissible couple with $n<d< N$. By Lemma \ref{locgen} it is sufficient to check Condition (I1):
$$\int_0^\infty \sigma^{\frac{2d}{d-n}}\phi^{n-1}\,dr <+\infty$$
Integrability for small $r$ is ensured by the boundness of $\sigma$. 
By \eqref{polynomialbehaviour}, as $r$ goes to infinity, 
$$ \sigma=\left(\frac{r}{\phi}\right)^{\frac{n-1}{2}}\approx r^{\frac{n-N}{2}},\quad \phi^{n-1}\approx r^{m(n-1)}\approx r^{N-1}$$
Thus 
$$ \sigma^{\frac{2d}{d-n}}\phi^{n-1}\approx r^{-\frac{d(N-n)}{d-n}+N-1}$$
Noting that 
$$-\frac{d(N-n)}{d-n}+N-1=-\frac{n(N-d)}{d-n}-1<-1,$$
we get Condition (I1), which completes the proof of Theorem \ref{theo.poly}.
\end{proof}

Corollary \ref{corol.scattering} is an immediate consequence of Lemma \ref{glgen}.
Let us now give a result concerning global estimates when the volume element grows exponentially at infinity.
\begin{theo}\label{theo.exp'}
Let $M$ be a rotationally symmetric manifold of dimension $n\geq 3$. Assume that there exist $\alpha>0$, such that  
\begin{gather}
\label{strictlynegativecurvature'}
\frac{\tau''}{\tau}\geq \frac{(n-1)^2}{4}\alpha^2-\frac{1/4-\delta_0}{r^2}\\
\label{exponentialbehaviour}
\exists A>0,\quad \phi(r)=e^{\alpha r}(A+o_3(r^{-1})),\quad r\rightarrow +\infty.
\end{gather}
Then the radial solutions of the free equation \eqref{eq:ls} satisfy for all n-admissible couples $(p_j,q_j)$ the weighted Strichartz estimate \eqref{wstrichartz}.
Furthermore for any $d\geq n$, the solutions of \eqref{eq:ls} satisfy all global $d$-admissible Strichartz estimates, and if $$0<p<\frac{4}{n-2}$$
then \eqref{NLS} has short-range behavior.
\end{theo}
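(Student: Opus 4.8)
The plan is to obtain Theorem~\ref{theo.exp'} from Proposition~\ref{prop.manif} together with Lemmas~\ref{locgen} and~\ref{glgen}, along the lines of the proof of Theorem~\ref{theo.poly}, the only structural difference being that here the constant $c_0$ of Proposition~\ref{prop.manif} is \emph{strictly positive}. The natural choice is
$$c_0=\frac{(n-1)^2}{4}\alpha^2,$$
which is the limit of $\frac{\tau''}{\tau}$ as $r\to+\infty$. Indeed, writing $\phi(r)=e^{\alpha r}(A+\eps(r))$ with $\eps=o_3(r^{-1})$, one computes
$$\frac{\phi'}{\phi}=\alpha+\frac{\eps'}{A+\eps},\qquad\frac{\phi''}{\phi}=\alpha^2+\frac{2\alpha\eps'+\eps''}{A+\eps},$$
so that $\frac{\phi'}{\phi}-\alpha$ and $\frac{\phi''}{\phi}-\alpha^2$ are $C^1$ and $O(r^{-2})$ with first derivative $O(r^{-3})$; inserting this into the expression \eqref{expressionV2} of $\frac{\tau''}{\tau}$ and using $\frac{n-1}{2}+\frac{(n-1)(n-3)}{4}=\frac{(n-1)^2}{4}$ gives $\frac{\tau''}{\tau}-c_0=O(r^{-2})$ with first derivative $O(r^{-3})$ for large $r$. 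Together with the boundedness near $r=0$ provided by the expansions \eqref{DL1}--\eqref{DL2}, this is exactly \eqref{boundtau''}.

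It then remains to verify the positivity and repulsivity conditions of Proposition~\ref{prop.manif} for this $c_0$. Condition \eqref{tau''>0} is nothing but the hypothesis \eqref{strictlynegativecurvature'} rewritten, since $r^2\left(\frac{\tau''}{\tau}-c_0\right)\geq\delta_0-\frac14$ is the same inequality as $\frac{\tau''}{\tau}\geq c_0-\frac{1/4-\delta_0}{r^2}$. For \eqref{tau'''<0}, putting $w=\frac{\tau''}{\tau}-c_0$ and $h=r^2w$, one uses the identity $-r^2\frac{d}{dr}(rw)=-h-r^3w'$ together with the asymptotics of $w$ and the pointwise lower bound $h\geq\delta_0-\frac14$ coming from \eqref{strictlynegativecurvature'} (shrinking $\delta_0$ to a smaller positive constant if necessary, which does not affect the conclusion), to get $-r^2\frac{d}{dr}\left(r\left(\frac{\tau''}{\tau}-c_0\right)\right)\geq\delta_0-\frac14$ for $r$ large. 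This is the delicate step: it requires differentiating the asymptotic expansion \eqref{exponentialbehaviour} and controlling the error terms uniformly at infinity, which is where the control of three derivatives in \eqref{exponentialbehaviour} is used, and where the precise matching of the exponential rate $\alpha$ with the constant $c_0=\frac{(n-1)^2}{4}\alpha^2$ of \eqref{strictlynegativecurvature'} enters. Proposition~\ref{prop.manif} then gives the weighted Strichartz estimate \eqref{wstrichartz}.

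Finally, to obtain the classical $d$-admissible estimates I would invoke Lemma~\ref{locgen}. For $d>n$ its integrability condition (I1), $\int_0^\infty\sigma^{\frac{2d}{d-n}}\phi^{n-1}\,dr<+\infty$, holds near $0$ since $\sigma$ is bounded, and at infinity because by \eqref{exponentialbehaviour} one has $\sigma^{\frac{2d}{d-n}}\phi^{n-1}\approx r^{\frac{(n-1)d}{d-n}}e^{-\frac{\alpha n(n-1)}{d-n}r}$, which decays exponentially; thus \emph{every} finite $d>n$ is admissible, in contrast with the polynomial case where the volume growth only allows $d<N$. The case $d=n$ follows directly from \eqref{wstrichartz}, because $\phi/r$ is bounded below and $r/\phi$ is bounded above, so the weights on both sides of \eqref{wstrichartz} can be removed up to a constant. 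The short-range property for \eqref{NLS} is then deduced from Lemma~\ref{glgen}: applied with $d>n$ it yields scattering on the power range $\frac4d\leq p<\frac4{d-2}$, and as $d$ ranges over $(n,+\infty)$ these ranges cover $\left(0,\frac4{n-2}\right)$ (for $p\geq\frac4n$ take $d$ slightly larger than $n$, for $p<\frac4n$ take $d=4/p$), which is the last assertion of the theorem.
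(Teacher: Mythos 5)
Your overall route is the paper's: apply Proposition \ref{prop.manif} with $c_0=\frac{(n-1)^2}{4}\alpha^2$, identify \eqref{tau''>0} with \eqref{strictlynegativecurvature'}, extract \eqref{boundtau''} and \eqref{tau'''<0} from the expansion \eqref{exponentialbehaviour}, and conclude with Lemmas \ref{locgen} and \ref{glgen}. Your verification of \eqref{boundtau''}, the integrability condition (I1) (exponential decay of the integrand for every $d>n$), the removal of the weights for $d=n$, and the coverage of the range $0<p<\frac{4}{n-2}$ by the intervals $[\frac4d,\frac4{d-2})$ are all correct; the only cosmetic difference from the paper is that it runs the asymptotics through $\psi=\tau e^{-\alpha\frac{n-1}{2}r}$ and its logarithmic derivatives rather than through $\phi'/\phi$ and $\phi''/\phi$ directly.

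There is, however, a genuine gap in your verification of \eqref{tau'''<0}. With $w=\frac{\tau''}{\tau}-c_0$ and $h=r^2w$, you must prove $-h-r^3w'\geq\delta_0-\frac14$ for large $r$, i.e.\ an \emph{upper} bound $h+r^3w'\leq\frac14-\delta_0$. The hypothesis \eqref{strictlynegativecurvature'} gives $h\geq\delta_0-\frac14$, hence $-h\leq\frac14-\delta_0$: this inequality points in the wrong direction and contributes nothing here, and shrinking $\delta_0$ does not help. Likewise, the bounds you actually establish, $w=O(r^{-2})$ and $w'=O(r^{-3})$, only yield $h+r^3w'=O(1)$ with a constant inherited from \eqref{exponentialbehaviour}, which need not be below $\frac14$. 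What closes the argument — and what the paper's computation is designed to produce — is smallness, not boundedness: from $\psi=A^{\frac{n-1}{2}}+o_3(r^{-1})$ one gets $w=\alpha(n-1)\frac{\psi'}{\psi}+\frac{\psi''}{\psi}$, and the $o$-content of the expansion (which is also what the proof of Theorem \ref{theo.poly} uses when it writes a genuine $o(1)$) gives $r^2w\to0$ and $r^3w'\to0$, so that $-r^2\frac{d}{dr}(rw)\to0>\delta_0-\frac14$ once $\delta_0<\frac14$. You should replace the appeal to the lower bound on $h$ by this convergence-to-zero argument; as written, your step does not go through.
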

It should also be possible to replace \eqref{strictlynegativecurvature'} by a negativity condition on the sectional curvature, however we were not able to write any satisfactory general result in this direction.
\begin{proof}
We will use Proposition \ref{prop.manif} with $c_0=\alpha^2\frac{(n-1)^2}{4}$. 
Assumption \eqref{tau''>0} of Proposition  \ref{prop.manif} is exactly \eqref{strictlynegativecurvature'}.

Let us now check \eqref{boundtau''} and \eqref{tau'''<0}. Let 
$$\psi:=\left(\frac{\phi}{e^{\alpha r}}\right)^{\frac{n-1}{2}}=\frac{\tau}{e^{\frac{n-1}{2}\alpha r}}.$$
By the same calculation as in the proof of Theorem \ref{theo.poly}, \eqref{exponentialbehaviour} implies
\begin{equation}
\label{psi}
\psi=A^{\frac{n-1}{2}}+o_3(r^{-1}).
\end{equation}
Furthermore, $\log \tau=\log \psi+\alpha\frac{n-1}{2}r$. Thus
\begin{equation}
\label{expo1}
\frac{\tau'}{\tau}=\frac{\psi'}{\psi}+\alpha\frac{n-1}{2}.
\end{equation}
Differentiating again, we get
\begin{equation}
\label{expo2}
\frac{\tau''}{\tau}=\left(\frac{\tau'}{\tau}\right)^2+\frac{\psi''}{\psi}-\left(\frac{\psi'}{\psi}\right)^2
=\left(\frac{\psi'}{\psi}+\alpha\frac{n-1}{2}\right)^2+\frac{\psi''}{\psi}-\left(\frac{\psi'}{\psi}\right)^2
\end{equation}
By \eqref{psi}, $\frac{\psi'}{\psi}=o_2(r^{-2})$, $\frac{\psi''}{\psi}=o_1(r^{-3})$ and thus
$$ \frac{\tau''}{\tau}=\alpha^2\frac{(n-1)^2}{4}+o_1(r^{-2}),\quad \frac{d}{dr}\left(r\frac{\tau''}{\tau}\right)=\alpha^2\frac{(n-1)^2}{4}+o(r^{-2}).$$
This yields \eqref{boundtau''} and \eqref{tau'''<0} with $c_0=\alpha^2\frac{(n-1)^2}{4}$. Thus all assumptions of Proposition \ref{prop.manif} holds, which shows that a solution $u$ of \eqref{eq:ls} satisfies all weighted Strichartz estimates \eqref{wstrichartz}. 

To complete the proof of Theorem \ref{theo.exp'}, it remains to check, in view of Lemma \ref{locgen}, that $\phi$ satisfies the condition (I1), which is obvious as $\phi(r)\sim Ae^{\alpha r}$ at infinity. The assertion on the solutions of the nonlinear Scrh\"odinger equations is then an immediate consequence of Lemma \ref{glgen}.
\end{proof}
A simple example of a manifold satisfying the assumptions of Theorem \ref{theo.exp'} is the hyperbolic space, where $\phi(r)=\sinh r$, $sec_r^{rad}=-1$, thus \eqref{strictlynegativecurvature'} and \eqref{exponentialbehaviour} hold with $\alpha=1$ and $A=\frac 12$ (\cite{Ba05}, \cite{Pi05}, \cite{BaCaSt06}). 

We finish with a remark on global estimates when the manifold $M$ is an Euclidian space.

\begin{remark}[Weights in the euclidian radial case]\label{euclidradial}
Let $n\geq 3$. 
The proof of Theorem \ref{theo.poly} is still valid when $\phi(r)=r^m$ with $m$ positive integer. The potential $V$, which is of order $\frac{1}{r^2}$ at the origin satisfy the assumptions of \cite{BuPlStTZ04}. In this case, the Laplacian is exactly the one on $\mathbb{R}^N$ with $N=1+m(n-1)\geq n$, and the volume element is $r^{m(n-1)}=r^{N-1}$. So our $u$ is in fact a radial solution of Schr\"odinger on $\mathbb{R}^N$. We have $\sigma(r)=r^{(1-m)\frac{n-1}{2}}=r^\frac{n-N}{2}$, and so
$$\left\|u\,r^{\frac{N-n}{2}\left(1-\frac{2}{q}\right)}\right\|_{L^p(\mathbb{R},L^q(\mathbb{R}^N))}\leq c\left\|u_0\right\|_{L^2(\mathbb{R}^N)},$$
for all n-admissible couples. In particular we get 
$$\left\|u\,r^\frac{N-n}{n}\right\|_{L^2(\mathbb{R},L^\frac{2n}{n-2}(\mathbb{R}^N))}\leq c\left\|u_0\right\|_{L^2(\mathbb{R}^N)},$$
which represents a gain at infinity combined with a loss at zero.
\end{remark}

\section{Global Strichartz estimates for Schr\"odinger equation with a potential on $\RR^n$}

\label{pot}

In this section we prove Theorem \ref{theo.dispersive}. 

\subsection{Known resolvent estimates with a related potential}
We recall here \cite[Theorem 2.1]{BuPlStTZ04} which is our essential tool in the proof of Theorem \ref{theo.dispersive}.
\begin{reftheo}
\label{theo.BPST}
Let $W\in C^{1}(\RR^n\backslash 0)$, such that 
\begin{gather}
\label{HypW1}
\tag{A1}
|W(x)|\leq \frac{C}{|x|^2}\\
\label{HypW2}
\tag{A2}
\exists\delta_0>0,\quad
\forall x\in \RR^n\setminus \{0\},\quad \left(\frac{n}{2}-1\right)^2+r^2 W\geq \delta_0\\
\tag{A3}
\left(\frac{n}{2}-1\right)^2-r^2\partial_r(r W)\geq \delta_0.
\end{gather}
Then there exists $C>0$ such that
\begin{equation}
\label{BPST}
\sup_{\mu \in \CC\setminus \RR} \Big\||x|^{-1}(-\Delta+W-\mu)^{-1}|x|^{-1}\Big\|_{L^2\rightarrow 
L^2}\leq C.
\end{equation}
\end{reftheo}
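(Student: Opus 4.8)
The plan is to reduce \eqref{BPST}, by duality, to a uniform \emph{a priori} estimate for $u=(-\Delta+W-\mu)^{-1}f$, namely
\[
\big\||x|^{-1}u\big\|_{L^2(\RR^n)}\le C\,\big\||x|\,f\big\|_{L^2(\RR^n)},\qquad C\text{ independent of }\mu\in\CC\setminus\RR,
\]
and to prove it by a Morawetz (positive--commutator) argument after a spherical--harmonic decomposition. By density take $f\in C^\infty_0(\RR^n\setminus 0)$ and, replacing $u$ by $\overline u$ if necessary, $\mu=\lambda+i\eps$ with $\eps>0$; then $\mu$ lies off the spectrum $[0,\infty)$ of the nonnegative self--adjoint operator $-\Delta+W$ (nonnegativity follows from Hardy's inequality and (A2), via $\|\nabla v\|_{L^2}^2+\int W|v|^2\ge\int\frac{(n/2-1)^2+r^2W}{r^2}|v|^2\ge\delta_0\int r^{-2}|v|^2$), so $u\in L^2$ decays exponentially at infinity and vanishes like $O(r)$ at the origin, which makes all boundary terms below disappear. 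Expanding $f$ and $u$ in spherical harmonics diagonalizes $-\Delta+W$ into the half--line operators $L_\nu=-\partial_r^2+\frac{\nu^2-1/4}{r^2}+W(r)$ on $L^2((0,\infty),dr)$, with $\nu=\nu_\ell=\ell+\frac{n-2}{2}\ge\nu_0=\frac n2-1\ge\frac12$ and $|x|^{-1}$ acting as multiplication by $r^{-1}$; by orthogonality it suffices to prove $\|r^{-1}u\|_{L^2}\le C\|rf\|_{L^2}$ for $u=(L_\nu-\mu)^{-1}f$, with $C$ independent of $\ell$ and of $\mu$.

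The first tool is the energy identity obtained by pairing $(L_\nu-\mu)u=f$ with $\overline u$: its imaginary part gives $\eps\|u\|_{L^2}^2\le\|rf\|_{L^2}\|r^{-1}u\|_{L^2}$, and its real part,
\[
\int_0^\infty|u'|^2\,dr+\int_0^\infty\Big(\frac{\nu^2-\tfrac14}{r^2}+W\Big)|u|^2\,dr-\lambda\|u\|_{L^2}^2=\re\langle f,u\rangle ,
\]
combined with the one--dimensional Hardy inequality $\int|u'|^2\ge\tfrac14\int r^{-2}|u|^2$ and the pointwise bound $\nu^2+r^2W\ge\nu_0^2+r^2W\ge\delta_0$ from (A2), yields $\int|u'|^2+\int(\frac{\nu^2-1/4}{r^2}+W)|u|^2\ge\delta_0\int r^{-2}|u|^2$. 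When $\lambda\le0$ the term $-\lambda\|u\|^2$ has the right sign, so $\delta_0\|r^{-1}u\|_{L^2}^2\le\re\langle f,u\rangle\le\|rf\|_{L^2}\|r^{-1}u\|_{L^2}$, which settles that range uniformly in $\ell$ and $\lambda$.

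The substantial case is $\lambda>0$, where $\mu$ approaches the continuous spectrum. Here I would pair $(L_\nu-\mu)u=f$ with the dilation multiplier $\overline{ru'+\tfrac12 u}$ and take real parts: the term $-u''$ contributes $\int|u'|^2$, the centrifugal term $(\nu^2-\tfrac14)\int r^{-2}|u|^2$, the potential $-\tfrac12\int r^3W'(r)\,r^{-2}|u|^2\,dr$, the terms carrying $\lambda$ cancel (the multiplier is scale--invariant), and there remains an $\eps$--dependent flux term $\eps\,\im\int ru\overline{u'}\,dr$. Using (A3) exactly in the form $-r^3W'=-r^2\partial_r(rW)+r^2W\ge\delta_0-\nu_0^2+r^2W$ to dispose of the potential term, then Hardy's inequality on $\int|u'|^2$, and finally $\nu^2+r^2W\ge\delta_0$ from (A2) once more, the left--hand side reduces to a clean lower bound:
\[
\int_0^\infty|u'|^2\,dr+\Big(\nu^2-\tfrac14\Big)\int_0^\infty\frac{|u|^2}{r^2}\,dr-\frac12\int_0^\infty r^3W'(r)\,\frac{|u|^2}{r^2}\,dr\ \ge\ \delta_0\int_0^\infty\frac{|u|^2}{r^2}\,dr .
\]
Hence $\delta_0\int r^{-2}|u|^2$ is dominated by the Morawetz right--hand side $\re\langle f,ru'+\tfrac12 u\rangle$ together with the flux term, and the remaining task is to bound these uniformly in $\ell$ and $\mu$: the flux via the current identity $\partial_r\im(\overline u u')=-\eps|u|^2-\im(f\overline u)$ and the bound $\eps\|u\|^2\le\|rf\|_{L^2}\|r^{-1}u\|_{L^2}$, and the first--derivative contribution either by replacing $r$ with a weight $a(r)$ that is $\sim r$ near $0$ and bounded near $\infty$ (treating $r\ge R$ separately, which is legitimate because (A3) holds for every $r$) or by a scaling reduction to $\lambda=1$, using that (A1)--(A3) and the centrifugal term are invariant under $W(r)\mapsto a^{-2}W(r/a)$. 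Squaring and summing over $\ell$ then recovers \eqref{BPST}.

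The genuine obstacle I expect is making every estimate uniform in $\nu$ and in $\mu$ \emph{simultaneously}, in particular as $\mu\to0$: this threshold behaviour is the absence of a zero--energy resonance for $-\Delta+W$, exactly what (A2) is designed to secure, and it is the single regime in which the purely geometric positivity in the Morawetz identity degenerates --- for the lowest angular momenta, and for $n=3$ where $\nu_0^2-\tfrac14=0$ --- so that the positive quantity $\delta_0\int r^{-2}|u|^2$ must be produced entirely out of the interplay of (A2), (A3) and Hardy's inequality exhibited in the displayed chain above. Tied to this, and also requiring care, is the uniform absorption of the first--derivative term and of the $\eps$--flux on the right--hand side of the Morawetz estimate.
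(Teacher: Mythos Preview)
The paper does not prove this statement: it is quoted verbatim as \cite[Theorem~2.1]{BuPlStTZ04} and used as a black box in the proof of Theorem~\ref{theo.dispersive}. So there is no ``paper's own proof'' to compare against; what you have sketched is essentially the original Burq--Planchon--Stalker--Tahvildar-Zadeh argument (a Morawetz/positive-commutator estimate with the dilation multiplier), and your displayed lower bound using Hardy together with (A2)--(A3) is the heart of their proof.

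Two remarks on your write-up. First, your reduction via spherical harmonics assumes $W$ is radial, which the theorem does not; the decomposition does not diagonalise $-\Delta+W$ otherwise. The cure is simply to run the same multiplier identity directly on $\RR^n$ with the antisymmetric operator $A=\tfrac{x}{|x|}\cdot\nabla+\tfrac{n-1}{2|x|}$ (or $x\cdot\nabla+\tfrac n2$); the angular Laplacian then produces the nonnegative term $(n-1)(n-3)/4\cdot r^{-2}$ automatically and the rest of your algebra survives unchanged. Second, the two loose ends you flag are genuine but standard: for $\re\langle f,ru'\rangle$ you do not need a bounded weight or a rescaling, since a fraction of $\int|u'|^2$ remains on the left after applying only part of Hardy's inequality and can absorb $\|u'\|_{L^2}\|rf\|_{L^2}$; for the $\eps$-flux $\eps\,\im\!\int r u\,\overline{u'}$, integrating the current identity $\partial_r\,\im(\overline u u')=-\eps|u|^2-\im(f\overline u)$ gives $\sup_r|\im(\overline u u')|\le \eps\|u\|_{L^2}^2+\|rf\|_{L^2}\|r^{-1}u\|_{L^2}\le 2\|rf\|_{L^2}\|r^{-1}u\|_{L^2}$, and the remaining factor is controlled by the left-hand side. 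With these two points tightened, your argument reproduces the cited result.
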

The preceding theorem implies weighted $L^2$ estimates on solutions of ($S_W$), which are the main tool to show Strichartz estimates in \cite{BuPlStTZ04}. We will use the same strategy, showing the resolvent estimates in Subsection \ref{sub.resolvent}. Subsections \ref{sub.smooth} and \ref{sub.Str} are devoted to the end of the proof of Theorem \ref{theo.dispersive}. We start with the following lemma.
\begin{lemma}
\label{claimVW}
Let $V\in C^{1}(\RR^N)$ satisfying \eqref{HypBound}, \eqref{HypPositive} and \eqref{HypRepulsive}. Then there exists $W$ satisfying the assumptions of Theorem \ref{theo.BPST} and such that
\begin{equation}
\label{V=W}
\forall x\geq 2R,\quad V(x)=W(x).
\end{equation}
\end{lemma}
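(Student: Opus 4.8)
\emph{Plan.} The idea is to keep $V$ unchanged outside the ball $\{|x|\le 2R\}$ and, inside that ball, to glue it to a large multiple of the model repulsive potential $\kappa/|x|^2$. This works because \eqref{HypPositive} already holds \emph{everywhere} for $V$, so the only obstruction to getting a potential satisfying all of \eqref{HypW1}, \eqref{HypW2} and (A3) is that (A3)/\eqref{HypRepulsive} is only known for $|x|\ge R$; and $\kappa/|x|^2$ is a potential for which \eqref{HypW2} and (A3) hold with slack, since $r^2(\kappa/r^2)=\kappa$ and $-r^2\partial_r(r\cdot\kappa/r^2)=\kappa$. Concretely, fix $\chi\in C^\infty([0,\infty))$ with $0\le\chi\le1$, $\chi'\ge0$, $\chi\equiv0$ on $[0,R]$ and $\chi\equiv1$ on $[2R,\infty)$; set $M_0:=\sup_{R\le|x|\le2R}|V(x)|$ (finite since $V$ is continuous on a compact set), choose $\kappa\ge(2R)^2M_0$, and define
\begin{equation*}
W(x):=\chi(|x|)\,V(x)+\bigl(1-\chi(|x|)\bigr)\,\frac{\kappa}{|x|^2}.
\end{equation*}

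Then $\chi(|x|)$ is $C^\infty$ on $\RR^n\setminus\{0\}$ and $V\in C^1$, so $W\in C^1(\RR^n\setminus\{0\})$; and $W=V$ on $\{|x|\ge2R\}$, which is \eqref{V=W}. The bound \eqref{HypW1} follows from \eqref{HypBound} where $|x|\ge2R$, from the explicit formula where $|x|\le R$, and on the annulus $\{R\le|x|\le2R\}$ from boundedness of $V$ together with $|x|^{-2}\ge(2R)^{-2}$. For \eqref{HypW2}, note that $r^2W=\chi\,r^2V+(1-\chi)\,\kappa$ is a convex combination, hence
\begin{equation*}
\bigl(\tfrac n2-1\bigr)^2+r^2W=\chi\bigl[\bigl(\tfrac n2-1\bigr)^2+r^2V\bigr]+(1-\chi)\bigl[\bigl(\tfrac n2-1\bigr)^2+\kappa\bigr]\ge\delta_0,
\end{equation*}
using \eqref{HypPositive} for the first bracket and $(\tfrac n2-1)^2\ge\delta_0$ (the $|x|\to0$ limit of \eqref{HypPositive}) for the second.

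For (A3) I would expand $r^2\partial_r(rW)$ by the product rule; because $\chi$ depends on $r$ one picks up a cross term, and the identity reads
\begin{equation*}
\bigl(\tfrac n2-1\bigr)^2-r^2\partial_r(rW)=\chi\bigl[\bigl(\tfrac n2-1\bigr)^2-r^2\partial_r(rV)\bigr]+(1-\chi)\bigl[\bigl(\tfrac n2-1\bigr)^2+\kappa\bigr]-r^3\chi'\Bigl(V-\frac{\kappa}{r^2}\Bigr).
\end{equation*}
On $\{r\le R\}$ the right-hand side equals $(\tfrac n2-1)^2+\kappa\ge\delta_0$, since there $\chi=\chi'=0$; on $\{r\ge2R\}$ it equals $(\tfrac n2-1)^2-r^2\partial_r(rV)\ge\delta_0$ by \eqref{HypRepulsive}; and on $\{R\le r\le2R\}$ the first bracket is $\ge\delta_0$ (again \eqref{HypRepulsive}, valid there since $r\ge R$), the second is $\ge\delta_0$, and the cross term is $\ge0$ because $\chi'\ge0$ and $\kappa/r^2\ge\kappa/(2R)^2\ge M_0\ge V$ on that annulus by the choice of $\kappa$. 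Hence $(\tfrac n2-1)^2-r^2\partial_r(rW)\ge\delta_0$ throughout, and all hypotheses of Theorem~\ref{theo.BPST} are verified.

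The only genuinely delicate point is the sign of the cross term $-r^3\chi'(V-\kappa/r^2)$ in the verification of (A3): this is exactly why the glued-in potential must be a sufficiently large multiple of $|x|^{-2}$ (a repulsive potential) rather than, say, a bounded function, and why the cutoff $\chi$ is taken monotone with $\chi'$ supported past $R$, where \eqref{HypRepulsive} is already available. Everything else is routine bookkeeping.
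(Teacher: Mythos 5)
Your construction is exactly the paper's: the same radial cutoff $\chi$ vanishing for $|x|\le R$ and equal to $1$ for $|x|\ge 2R$, the same glued-in potential $\kappa/|x|^2$ with $\kappa$ large (the paper calls it $A$ and takes $A>\sup_x|x|^2V(x)$ rather than the sup over the annulus, an immaterial difference), and the same convex-combination check of (A2) and cross-term sign check of (A3). The proof is correct and essentially identical to the one in the paper.
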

\begin{proof}
We choose a nondecreasing radial positive function $\chi$ such that $\chi=1$ for $|x|\geq 2R$, $\chi=0$ for $|x|\leq R$. Let $A>0$ be a large parameter and
$$ W_A:=(1-\chi)\frac{A}{r^2}+\chi V.$$
Note that $W_A\in C^{1}(\RR^n\setminus\{0\})$, and by \eqref{HypBound}, $W_A$ satisfies (A1). Furthermore, if 
$$\left(\frac n2-1\right)^2+A\geq \delta_0,$$
by assumption \eqref{HypPositive}, we get
\begin{equation*}
\left(\frac{n}{2}-1\right)^2+r^2W_A=\left(\left(\frac{n}{2}-1\right)^2+r^2V\right)\chi+\left(\left(\frac{n}{2}-1\right)^2+A\right)(1-\chi)\geq \delta_0 \chi +\delta_0(1-\chi)=\delta_0,
\end{equation*}
By assumption \eqref{HypBound}, $\sup_{x} |x|^2V(x)$ is finite, so we can choose $A$ larger than it. Then, using also \eqref{HypRepulsive},
\begin{equation*}
\left(\frac{n}{2}-1\right)^2-r^2\partial_r(rW_A)=\left(\left(\frac{n}{2}-1\right)^2\chi-r^2\partial_r(rV)\right)+( A-r^2V)r\partial_r \chi+\left(\left(\frac{n}{2}-1\right)^2+A\right)(1-\chi)\geq \delta_0,
\end{equation*}
Thus, if $A$ is large, all assumptions of Theorem \ref{theo.BPST} are satisfied for the potential $W:=W_{A}$. Furthermore, by the definition of $\chi$, $W$ also satisfies \eqref{V=W}.
\end{proof}

\subsection{Resolvent estimates}
\label{sub.resolvent}
Consider the quadratic form $Q(u)=\int |\nabla u|^2+\int V|u|^2$, with domain $D(Q)=H^1$.  By \eqref{HypPositive}, and Hardy's inequality, $Q$ is positive. We define 
$$P_V=-\Delta +V$$ 
to be the self-adjoint operator defined by the Friedrichs extension associated to $Q$. Note that $V$ is bounded and tends to $0$ at infinity, so that the essential spectrum of $P_V$ is $[0,\infty[$ (see e.g. \cite[Theorem XIII.14]{ReSi.T4}). By  \eqref{HypPositive}, $P_V$ does not have any negative eigenvalue. Thus the resolvent $(P_V-\mu)^{-1}$ is well defined for $\mu\in \CC\setminus [0,+\infty)$. Let us show
\begin{prop}
\label{lem.estimates}
There exists $C>0$, such that for all $\lambda\in \RR$, $\forall \eps$, such that $0<|\eps|<1$,
\begin{equation}
\label{RE}
\left\|\brx^{-1}(P_V-\lambda-i\eps)^{-1}\brx^{-1}\right\|_{L^2\rightarrow L^2}\leq \frac{C}{\sqrt{|\lambda|+1}}.
\end{equation}
\end{prop}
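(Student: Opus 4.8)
The plan is to prove the resolvent estimate \eqref{RE} in two regimes, separated by a fixed threshold $\lambda_0>0$, and to exploit Lemma \ref{claimVW} to import the uniform bound \eqref{BPST} for a related potential $W$ that agrees with $V$ outside $B(0,2R)$.

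\medskip

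\emph{High-frequency / away-from-zero regime.} For $|\lambda|\geq \lambda_0$ (with $\lambda_0$ to be chosen), the estimate is classical: it is the standard limiting absorption principle for Schr\"odinger operators with a short-range potential, and the factor $1/\sqrt{|\lambda|+1}$ reflects the usual gain in the high-frequency resolvent bound. More precisely, I would first record the free resolvent estimate $\|\brx^{-1}(-\Delta-\lambda-i\eps)^{-1}\brx^{-1}\|_{L^2\to L^2}\leq C/\sqrt{|\lambda|+1}$, valid for all $\lambda\in\RR$ and $0<|\eps|<1$ (Agmon--H\"ormander / Kato). Then, writing $P_V-\lambda-i\eps=(-\Delta-\lambda-i\eps)(I+(-\Delta-\lambda-i\eps)^{-1}V)$ and using that $V=V\brx^{-1}\cdot\brx$ with $\brx V$ still $O(\brx^{-1})$ by \eqref{HypBound}, a Neumann-series / resolvent-identity argument shows that for $|\lambda|$ large the perturbation $\brx(-\Delta-\lambda-i\eps)^{-1}V\brx^{-1}$ has operator norm $\leq 1/2$, so the perturbed resolvent inherits the free bound, with a constant independent of $\eps$. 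This fixes $\lambda_0$.

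\medskip

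\emph{Low-frequency regime, $|\lambda|\leq\lambda_0$.} Here the target bound is just $\|\brx^{-1}(P_V-\lambda-i\eps)^{-1}\brx^{-1}\|\leq C$, uniformly. This is the delicate part: it amounts to the absence of a resonance (or eigenvalue) of $P_V$ at $0$, which is exactly where the positivity/repulsivity hypotheses \eqref{HypPositive}, \eqref{HypRepulsive} are needed. The strategy is to reduce to the potential $W$ of Lemma \ref{claimVW}: since $W$ satisfies (A1)--(A3), Theorem \ref{theo.BPST} gives $\sup_{\mu\in\CC\setminus\RR}\||x|^{-1}(-\Delta+W-\mu)^{-1}|x|^{-1}\|_{L^2\to L^2}\leq C$, hence (comparing $|x|^{-1}$ and $\brx^{-1}$ weights, which differ only on the bounded set $B(0,2R)$ where both resolvents are controlled by elliptic regularity away from the singularity of $|x|^{-1}$) also $\sup\|\brx^{-1}(P_W-\mu)^{-1}\brx^{-1}\|\leq C$. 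Now write $V=W+(V-W)$ where $V-W$ is supported in $B(0,2R)$, bounded, and compactly supported, hence $\brx(V-W)\brx$ is a bounded compactly supported multiplier. Using the resolvent identity
\begin{equation*}
(P_V-\mu)^{-1}=(P_W-\mu)^{-1}-(P_W-\mu)^{-1}(V-W)(P_V-\mu)^{-1},
\end{equation*}
sandwiched by $\brx^{-1}$, and the fact that $\brx^{-1}(P_W-\mu)^{-1}\brx^{-1}$ is uniformly bounded, one gets
\begin{equation*}
\brx^{-1}(P_V-\mu)^{-1}\brx^{-1}=\big(I+\brx^{-1}(P_W-\mu)^{-1}(V-W)\brx\big)^{-1}\brx^{-1}(P_W-\mu)^{-1}\brx^{-1},
\end{equation*}
so the claim follows once the operator $T(\mu):=\brx^{-1}(P_W-\mu)^{-1}(V-W)\brx$ has $I+T(\mu)$ invertible with uniformly bounded inverse on the compact set $\{|\lambda|\leq\lambda_0,\ 0<|\eps|<1\}\cup(\text{boundary }\eps=0)$. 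Since $(V-W)\brx$ is a bounded operator from $L^2$ into $L^2_{\mathrm{comp}}\subset\brx^{-1}L^2$, $T(\mu)$ is compact-valued and norm-continuous in $\mu$ up to $\eps=0$ (by the limiting absorption principle for $P_W$); by analytic Fredholm theory $I+T(\mu)$ is invertible for all such $\mu$ provided it is injective, i.e.\ provided $0$ is not an eigenvalue or resonance of $P_V$. The latter is forced by \eqref{HypPositive}: if $(P_V-\lambda)u=0$ with $u$ in the appropriate weighted space and $\lambda\leq 0$, then $0=\langle P_V u,u\rangle-\lambda\|u\|^2\geq \int|\nabla u|^2+\int V|u|^2\geq \delta_0\int\frac{|u|^2}{r^2}$ by \eqref{HypPositive} combined with Hardy's inequality exactly as used to define $P_V$, forcing $u=0$; a Rellich-type uniqueness argument (using the repulsivity \eqref{HypRepulsive} at infinity, equivalently that it holds for $W$ via (A3)) rules out nontrivial threshold resonances. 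A compactness argument over $|\lambda|\leq\lambda_0$ then upgrades pointwise invertibility to a uniform bound.

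\medskip

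The main obstacle is the low-frequency analysis at $\lambda=0$: establishing that $I+T(0)$ is invertible, i.e.\ the absence of zero-energy resonances for $P_V$, and doing so uniformly as $\eps\to 0$. This is precisely the point at which the structural hypotheses \eqref{HypPositive}--\eqref{HypRepulsive} (equivalently (A2)--(A3) for the comparison potential $W$) are essential, and where Theorem \ref{theo.BPST} does the heavy lifting — our only new ingredient is the reduction of the bounded-at-origin potential $V$ to the allowed singular potential $W$ via Lemma \ref{claimVW} plus a compact perturbation argument.
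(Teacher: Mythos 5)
Your overall architecture is close to the paper's: split into a large-$|\lambda|$ regime handled perturbatively and a remaining regime handled via Lemma \ref{claimVW} and Theorem \ref{theo.BPST}, with positivity of the quadratic form ruling out a zero resonance. But there is a genuine gap in how your two regimes fit together. The Neumann-series argument only closes for $|\lambda|\geq\lambda_0$ with $\lambda_0$ \emph{large} (you need $\tfrac{KC_s}{\sqrt{\lambda_0}}<1$), so your ``low-frequency'' regime necessarily contains positive energies bounded away from zero, say $\lambda\in[\eta_0,\lambda_0]$. There, invertibility of $I+T(\lambda+i0)$ is precisely the absence of positive embedded eigenvalues of $P_V$ together with a Rellich-type uniqueness statement at positive energy; your injectivity argument via $\int|\nabla u|^2+\int V|u|^2\geq\delta_0\int|\nabla u|^2$ only works for $\lambda\leq 0$ (for $\lambda>0$ the identity $\langle P_Vu,u\rangle=\lambda\|u\|^2>0$ is perfectly consistent with positivity of the form). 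This intermediate regime is exactly where the paper invokes Kato's theorem on the absence of positive eigenvalues and the Agmon--Kato--Kuroda limiting absorption principle (its Lemma \ref{claim.bounded}); some such input must be added to your argument.

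A second, related gap: your ``compactness argument over $|\lambda|\leq\lambda_0$'' to upgrade pointwise invertibility of $I+T(\mu)$ to a uniform bound requires norm-continuity of $\mu\mapsto T(\mu)$ up to the real axis, i.e.\ existence and continuity of the boundary values $(P_W-\lambda\mp i0)^{-1}$ in weighted spaces. Theorem \ref{theo.BPST} supplies only a $\sup$ bound over $\mu\in\CC\setminus\RR$, not continuity, so the analytic Fredholm step is not justified as written. The paper avoids this entirely near $\lambda=0$ by a direct contradiction argument: it takes sequences $(\lambda_n,\eps_n,f_n,u_n)$ violating the estimate, uses the uniform $W$-bound away from the origin plus local elliptic estimates and Rellich compactness to extract a limit $u$ solving $-\Delta u+Vu=0$ with $\brx^{-1}u\in L^2$, and then kills $u$ with the positivity of the form. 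If you replace your Fredholm/continuity step by such a sequential compactness argument (and treat $\lambda\in[\eta_0,M]$ by Agmon--Kato--Kuroda as above), your proof closes. The remaining ingredients of your proposal are fine; in particular the passage from the $|x|^{-1}$ weights of \eqref{BPST} to $\brx^{-1}$ weights is immediate from $|x|\brx^{-1}\leq 1$ and needs no elliptic regularity.
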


\begin{proof}
We must distinguish between the values of $\lambda$. First note that far away from the spectrum $[0,+\infty)$ of $P_V$, estimate \eqref{RE} is obvious:
\begin{lemma}[Elliptic estimates]
\label{claim.ell}
If $\eta_0>0$,  exists $C>0$ such such that \eqref{RE} holds for $\lambda\leq -\eta_0$ and any $\eps$, $0<|\eps|<1$.
\end{lemma}
\noindent
We now turn to the estimates for $\lambda>0$, which are classical.
\begin{lemma}[Estimate for bounded positive $\lambda$]
\label{claim.bounded}
Let $0<\eta_0<M$. There exists $C>0$ (depending only on $M$ and $\eta_0$) such that \eqref{RE} holds for $\lambda\in [\eta_0,M]$, $0<|\eps|<1$.
\end{lemma}
\begin{proof}
The potential $V$ is bounded and $|x|V(x)$ tends to $0$ at infinity, so by Kato's Theorem (\cite[Theorem XII.58]{ReSi.T4}) the spectrum of $P_V$ does not contain any positive eigenvalue. Furthermore, one can write 
 $$V=\frac{1}{\brx^{2}}V_0,$$ 
 with $V_0\in L^{\infty}$, obtaining that $V$ is an Agmon potential. By the Agmon-Kato-Kuroda Theorem \cite[Theorem XIII.33 ]{ReSi.T4}), for any $s>1/2$
$$ \sup_{\substack{\lambda\in [\eta_0,M]\\ 0<|\eps|<1}} \|\brx^{-s}(P_V-\lambda-i\eps)^{-1}\brx^{-s}\|_{L^2\rightarrow L^2}<\infty,$$
which yields estimate \eqref{RE}, with a better weight.
\end{proof}
\begin{lemma}[Estimate for large positive $\lambda$]
\label{claim.large}
There exists $M>0$, $C>0$ such that \eqref{RE} holds for $\lambda\geq M$, $0<|\eps|<1$.
\end{lemma}
\begin{proof}
This is also classical and an immediate consequence of the fact that $V$ is a short-range potential. Recall the estimate on the resolvent of the free operator: for $s>1/2$, there exists $C_s>0$.
\begin{equation}
\label{freeHF}
\forall \lambda\geq 1,\quad \|\brx^{-s}(-\Delta -\lambda -i\eps)^{-1}\brx^{-s} \|\leq \frac{C_s}{\sqrt{\lambda}}.
\end{equation}
Let $f\in C^{\infty}_0(\RR^n)$ and $u:=(P_V-\lambda -i\eps)^{-1}\brx^{-1} f$. Then
$$ -\Delta u-\lambda u-i\eps u=\brx^{-1} f-Vu. $$
Let $s\in (1/2,1)$. By \eqref{freeHF},
$$\|\brx^{-s} u \|_{L^2}\leq \frac{C_s}{\sqrt{\lambda}}\|\brx^{s-1} f\|_{L^2}+\frac{C_s}{\sqrt{\lambda}}\|\brx^{s} V u\|_{L^2}.$$
By \eqref{HypBound}, 
$$|\brx^s V |\leq K\brx^{s-2}\leq K\brx^{-s}$$ 
for some positive constant $K$. Chosing $M$ such that $\frac{K C_s}{\sqrt{M}}$ is strictly less than one, we get for $\lambda\geq M$ a stronger estimate than \eqref{RE}. 
\end{proof}
\noindent
Our last Lemma yields the estimate near $\lambda=0$ which is the only one that derives from the results of \cite{BuPlStTZ04}.
\begin{lemma}
\label{claim0}
There exist $\eta_0,\;C>0$ such that \eqref{RE} holds for $\lambda\in [-\eta_0,+\eta_0]$, $0<|\eps|<1$.
\end{lemma}
\begin{proof}
We divide the proof into two steps. First we shall prove a weaker estimate, and then we shall deduce \eqref{RE}. 

\medskip

\noindent\emph{Step 1: proof of a weaker estimate.}\\
Let us show that for any $\chi\in C_0^{\infty}$, there exists $\eta_0$ and a constant $C$ such that for $\lambda\in [-\eta_0,\eta_0]$, $0<|\eps|<1$,
\begin{equation}
\label{ER.interm}
\left\|\chi(P_V-\lambda-i\eps)^{-1}\brx^{-1}\right\|_{L^2\rightarrow L^2}\leq \frac{C}{\sqrt{\lambda+1}}.
\end{equation}
Note that if \eqref{ER.interm} holds for some $\chi\in C_0^{\infty}(\RR^d)$, then it holds for any $\widetilde{\chi}\in C_0^{\infty}(\RR^d)$ with $\supp \widetilde{\chi}\subset \{\chi\geq 1\}$. Indeed,
$$\left\|\widetilde{\chi}(P_V-\lambda-i\eps)^{-1}\brx^{-1}\right\|_{L^2\rightarrow L^2}\leq \left\|\chi (P_V-\lambda-i\eps)^{-1}\brx^{-1}\right\|_{L^2\rightarrow L^2}$$
% $$\leq C\left\|(P_V-\lambda-i\eps)^{-1}\brx^{-1}\right\|_{L^2\rightarrow L^2(\{\chi\geq 1\})}\leq C \left\|\chi(P_V-\lambda-i\eps)^{-1}\brx^{-1}\right\|_{L^2\rightarrow L^2}\leq \frac{C}{\sqrt{\lambda+1}}.$$
Let $R>0$ arbitrary. It is sufficient to prove \eqref{ER.interm} for $\chi\in C_0^{\infty}(\RR^d)$ satisfying
\begin{equation}
\label{chi1}
|x|\leq 3R\Longrightarrow \chi(x)=1,
\end{equation}
where $R$ is given by Lemma \ref{claimVW}. We argue by contradiction. If \eqref{ER.interm} does not hold, there exist sequences $\lambda_n, \,\eps_n\in \RR$ , $f_n,\,u_n\in L^2$  such that
\begin{gather}
\label{lambda.eps}
0<\eps_n<1,\quad \lim_{n\rightarrow +\infty} \lambda_n=0\\
\label{CVabsurd}
\|\chi u_n\|_{L^2}=1,\quad \lim_{n\rightarrow +\infty} \|f_n\|_{L^2}=0\\
\label{eqabsurd}
(-\Delta +V-\lambda_n-i\eps_n)u_n=\brx^{-1} f_n.
\end{gather}
Let us first show
\begin{gather}
\label{boundbr}
\exists C>0,\; \forall n,\;\|\brx^{-1}u_n\|_{L^2}\leq C,\\
\label{boundL2loc}
\forall \tilde{\chi}\in C^{\infty}_0,\; \exists C>0,\; \forall n, \quad\|\tilde{\chi} u_n\|_{L^2}+ \|\tilde{\chi}\nabla u_n\|_{L^2}\leq C.
\end{gather}
Let $ \tilde{\chi}\in C^{\infty}_0$. A straightforward integration by parts gives us from \eqref{eqabsurd}
\begin{equation}\label{IBP}
\int \tilde{\chi}|\nabla u_n|^2=\int\Delta\tilde{\chi}\,|u_n|^2+\re \tilde{\chi}\brx^{-1}f_n\overline{u}_n-\tilde{\chi}(V-\lambda_n)|u_n|^2.
\end{equation}
So by \eqref{chi1} and \eqref{CVabsurd}, estimate \eqref{boundL2loc} holds if $\supp \tilde{\chi}\subset\{|x|\leq 3R\}$.
Let $\psi\in C^{\infty}_0$ such that
\begin{equation}
\label{goodpsi}
|x|\leq 2R\Longrightarrow \psi(x)=0 ,\quad |x|\geq 3R\Longrightarrow \psi(x)=1.
\end{equation}
Then
\begin{equation*}
(-\Delta +V-\lambda_n-i\eps_n)\psi u_n=\brx^{-1} \psi f_n-\Delta \psi u_n-2\nabla \psi \cdot\nabla u_n.
\end{equation*}
As $\supp\psi \subset \{|x|\geq 2R\}$, we may replace $V$ in the preceding equation by the potential $W$ given by Lemma \ref{claimVW}. 
Thus by Theorem \ref{theo.BPST}, 
$$\|\brx^{-1}\psi u_n\|_{L^2}\leq C\left\|f_n-\brx\Delta \psi u_n-2\brx\nabla \psi \cdot\nabla u_n\right\|_{L^2}.$$ 
Since $\supp \nabla \psi\subset \{|x|\leq 3R\}$, we can use \eqref{boundL2loc}, and get an uppper-bound for the right hand side, independent of $n$. This, together with the first part of \eqref{CVabsurd}, gives \eqref{boundbr}. The first part of estimate \eqref{boundL2loc} is a direct consequence of \eqref{boundbr}. The second part follows from \eqref{IBP}. \\

According to \eqref{boundL2loc}, $u_n$ is bounded in $H^1_{\loc}$. Extracting subsequences if necessary, there exist $u\in L^2_{\loc}(\RR^n)$ such that
\begin{equation}
\label{limite}
\lim_{n\rightarrow \infty} u_n =u \text{ in }L^2_{\loc}.
\end{equation}
According to \eqref{CVabsurd} and \eqref{eqabsurd}, $u$ satisfies the equation
\begin{equation}
\label{eqresonnance}
-\Delta u+V u+i\eps u=0
\end{equation}
Furthermore, by \eqref{boundbr}
\begin{equation}
\label{bruL2}
\brx^{-1} u\in L^2.
\end{equation}
Let us show that $u=0$, which will contradict, together with \eqref{limite}, the equality $\|\chi u_n\|=1$ in \eqref{CVabsurd}. Let $\varphi \in C^{\infty}_0$ such that $\varphi(x)=1$ if $|x|\leq 1$ and $\varphi=0$ if $|x|\geq 2$. Multiplying \eqref{eqresonnance} by $\overline{u}\varphi(x/\rho)$ and taking the real part, we get
\begin{equation}
\label{IPPrho}
\int |\nabla u|^2\varphi(x/\rho)+\int V|u|^2\varphi(x/\rho)-\int \frac{1}{\rho^2} \Delta \varphi(x/\rho)|u|^2=0.
\end{equation}
Furthermore, 
$$\int \frac{1}{\rho^2} \Delta \varphi(x/\rho)|u|^2\leq C\int_{\rho\leq|x|\leq 2\rho} \frac{1}{|x|^2}|u|^2,$$
which tends to $0$ as $\rho$ tends to infinity in view of \eqref{bruL2}. Noting that by \eqref{HypBound} and \eqref{bruL2}, $\int V|u|^2$ is finite, so lletting $\rho$ tends to infinity in \eqref{IPPrho}, we get that $\nabla u\in L^2$ and
\begin{equation}
\label{positivity}
\int |\nabla u|^2+\int V|u|^2=0.
\end{equation}
This shows that $u=0$ by assumption \eqref{HypPositive}, and Hardy's inequality, which imply that  
$$ \forall u\in \hdot,\quad \int |\nabla u|^2+\int V|u|^2\geq \delta_0\int|\nabla u|^2.$$
(see e.g. Proposition 1.3 of \cite{BuPlStTZ04}). The proof of \eqref{ER.interm} is complete.

\medskip

\noindent\emph{Step 2: end of the proof.}\\
Take $\lambda\in [-\eta_0,\eta_0]$ and $0<\eps <1$. If $f\in C^{\infty}_0(\RR^n)$ and $u=(P_V-\lambda-i\eps)^{-1} (\brx^{-1} f)$, we have
$$ (-\Delta +V -\lambda-i\eps)u =\brx^{-1}f.$$
According to \eqref{ER.interm}, for any $\chi \in C^{\infty}_0(\RR^n)$, there exists a constant $C>0$ such that
\begin{equation}
\label{chiu}
\|\chi u\|_{L^2}+\|\chi \nabla u\|_{L^2}\leq C\|f\|_{L^2}.
\end{equation}
Let $\psi$ be as in \eqref{goodpsi}. Then
$$ (-\Delta +W-\lambda-i\eps)(\psi u)=\brx^{-1}\psi f-\Delta \psi u -2\nabla\psi\cdot\nabla u.$$
Hence by Theorem \ref{theo.BPST}
$$ \|\brx^{-1}\psi u\|_{L^2}\leq \|\psi f-\brx \Delta \psi u -2\brx \nabla\psi\cdot\nabla u\|_{L^2}$$
which yields, together with \eqref{chiu}, the inequality
$$ \|\brx^{-1} u\|_{L^2}\leq C\|f\|_{L^2}.$$
The proof of Lemma \ref{claim0} is complete.
\end{proof}
\noindent
Putting together Lemmas \ref{claim.ell}, \ref{claim.bounded}, \ref{claim.large} and \ref{claim0}, we get \eqref{RE} for all $\lambda \in \RR$ and all $\eps\neq 0$, which concludes the proof of Proposition \ref{lem.estimates}.
\end{proof}
\begin{remark}
In the proof of the last Lemma, we have shown that $P_V$ does not admit any $0$ resonnance.
\end{remark}

The end of the proof of Theorem \ref{theo.dispersive} follows the strategy of \cite{BuPlStTZ04}. We recall it for the  sake of completness in the following two subsections. 

\subsection{Proof of smoothing effect}
\label{sub.smooth}

From standard arguments that go back to \cite{Ka65} (see \cite[Proposition 2.7]{BuGeTz04} for an elementary proof), it is sufficient to show that for $\eps\neq 0$,  $\brx^{-1}(P_V-\lambda-i\eps)^{-1} \brx^{-1}$ extends to a map from $H^{-\frac 12}$ to $H^{\frac 12}$ with the following uniform bound:
\begin{equation}
\label{goodIR}
\exists C>0,\; \forall \lambda\in \RR,\; \forall \eps\neq 0,\quad \left\|\brx^{-1}(P_V-\lambda-i\eps)^{-1} \brx^{-1}\right\|_{H^{-1/2}\rightarrow H^{1/2}}\leq C.
\end{equation}

We first show
\begin{equation}
\label{IR2}
\exists C>0,\; \forall \lambda\in \RR,\; \forall \eps\neq 0,\quad \left\|\brx^{-1} (P_V-\lambda-i\eps)^{-1} \brx^{-1}\right\|_{L^2\rightarrow H^{1}}\leq C.
\end{equation}
Let $f\in L^2$ and $u=(P_V-\lambda-i\eps)(\brx^{-1} f)$. Then by Lemma \ref{lem.estimates}
\begin{equation}
\label{est.L2}
\left\|\brx^{-1} u\right\|_{L^2}\leq \frac{C}{\sqrt{|\lambda|+1}}\|f\|_{L^2}.
\end{equation}
To get informations on the gradient of $\brx^{-1} u$, we consider
$$ (P_V-\lambda-i\eps)\big(\brx^{-1}u\big)=\underbrace{-\Big(\Delta\brx^{-1}\Big)u-2\nabla \brx^{-1}\cdot \nabla u -(\lambda+i\epsilon)\brx^{-1} u}_{F}.$$
Multiplying by $\brx^{-1}\ubar$, integrating on $\RR^d$ and taking the real part, we obtain
\begin{equation}
\label{est.L2bis}
\left\|\nabla \big(\brx^{-1}u\big)\right\|^2_{L^2}\leq C\left(\left\|\brx^{-1}V u\right\|^2_{L^2}+\lambda \left\|\brx^{-1} u\right\|^2_{L^2}+\left|\re \int \brx^{-1}F \ubar\right|\right).
\end{equation}
It remains to bound $\re \int \brx^{-1}F \ubar$. By integration by part:
$$ 2\re \int \nabla \brx^{-1}\cdot \nabla u \brx^{-1}\ubar=\int \brx^{-1}\nabla \brx^{-1}\cdot \nabla |u|^2=-\int \divergence (\brx^{-1}\nabla \brx^{-1})|u|^2, $$
From Cauchy-Schwarz inequality and the bound $|D^k\brx^{-1}|\leq C_k\brx^{-(k+1)}$ (where $D^k$ is any derivative of order $k$), we get
$$\left|\re \int \brx^{-1}F \ubar\right|\leq C(1+|\lambda|)\left\|\brx^{-1}u\right\|^2_{L^2}.$$
Together with \eqref{est.L2}, \eqref{est.L2bis}  and the boundedness of $V$, we get
$$ \left\| \brx^{-1}u\right\|_{H^1}\leq C\|f\|_{L^2},$$
hence \eqref{IR2}.

The end of the proof of \eqref{goodIR} is now very classical. Noting that the adjoint of the bounded operator on $L^2$ $\brx^{-1}(P_V-\lambda-i\eps)^{-1} \brx^{-1}$ is $\brx^{-1}(P_V-\lambda+i\eps)^{-1} \brx^{-1}$, and using \eqref{IR2} with $-\eps$ instead of $\eps$ we get that $\brx^{-1}(P_V-\lambda-i\eps)^{-1} \brx^{-1}$ extends to a map from $H^{-1}$ to $L^2$ with the bound
\begin{equation}
\label{IR3}
\left\|\brx^{-1}(P_V-\lambda-i\eps)^{-1} \brx^{-1}\right\|_{H^{-1}\rightarrow L^2}\leq C.
\end{equation}
Interpolating between \eqref{IR2} and \eqref{IR3}, we get \eqref{goodIR}, which concludes the proof of the smoothing effect in Theorem \ref{theo.dispersive}.

\subsection{Proof of Strichartz estimates}\label{sub.Str}
We shall need the Lorentz Spaces $L^{\frac{2n}{n-2},2}$, $L^{\frac{2n}{n+2},2}$ and $L^{n,\infty}$. Recall that $L^{\frac{2n}{n-2},2}$ is slightly smaller than $L^{\frac{2n}{n-2}}$, that $L^{\frac{2n}{n+2},2}$ is the dual of $L^{\frac{2n}{n-2},2}$, and that a smooth function of order $\frac{1}{|x|}$ at infinity is in $L^{n,\infty}$, commonly refered as weak $L^n$. O'Neil inequality states a generalization of H\"older inequality for Lorentz spaces \cite{ON63},
\begin{equation}
\label{ONeil}
\|FG\|_{L^{\frac{2n}{n+2},2}}\leq \|F\|_{L^{2}}\|G\|_{L^{n,\infty}}.
\end{equation}
Furthermore, by the refined endpoint inequality (see \cite{KeTa98}), there is a constant $C>0$ such that if $U$ is a solution of the free Schr\"odinger equation on $\RR^n$
$$\left\{\begin{array}{c}
 i\partial_t U+\Delta U=F,\\
U(0)=U_0,
\end{array}\right.$$
then
\begin{equation}
\label{freeendpoint}
\|U\|_{L^2(\RR,L^{\frac{2n}{n-2},2})}\leq C\Big(\|U_0\|_{L^2}+\|F\|_{L^2(\RR,L^{\frac{2n}{n+2},2})}\Big).
\end{equation}

We shall adapt the argument of \cite[section 3]{BuPlStTZ04}, to the case where the right-member $f$ of \eqref{LS} is nonzero. In \cite{BuPlStTZ04}, this case is not considered ; it has been recently treated in \cite{Pi05}. For the sake of completeness, we give here the proof. We first show the endpoint estimate \eqref{Strichartz} with $p_1=p_2=2$ and $q_1=q_2=\frac{2n}{n-2}$
\begin{equation}
\label{Strichartz0}
\|u\|_{L^2(\RR,L^{\frac{2n}{n-2}})}\leq C\left(\|u_0\|_{L^2}+\|f\|_{L^2(\RR,L^{\frac{2n}{n+2}})}\right).
\end{equation}
Writing 
$$\left\{\begin{array}{c}
i\partial_t u+\Delta u=f+V u,\\
 u(0)=u_0,
 \end{array}\right.$$
and using \eqref{freeendpoint}, we get
\begin{equation}\label{Strichartz1}
 \|u\|_{L^2,L^{\frac{2n}{n-2},2}}\leq C\left(\|u_0\|_{L^2}+\|Vu\|_{L^2(\RR,L^{\frac{2n}{n+2},2})}+\| f\|_{L^2(\RR,L^{\frac{2n}{n+2},2})}\right).
 \end{equation}
One one hand, by \eqref{ONeil}, assumption \eqref{HypBound} on $V$, and the smoothing estimate shown in the previous subsection,
\begin{equation*}
\|V u\|_{L^2(\RR,L^{\frac{2n}{n+2},2})}\leq\|\brx V\|_{L^{n,\infty}}\|\brx^{-1}u\|_{L^2(\RR,L^{2})}\leq C\left(\|u_0\|_{L^2}+\|\brx f\|_{L^2(\RR,L^2)}\right),
\end{equation*}
On the other hand, again by \eqref{ONeil},
$$\|f\|_{L^2(\RR,L^{\frac{2n}{n+2},2})}\leq \|\brx^{-1} \|_{L^{n,\infty}}\|\brx f\|_{L^2(\RR,L^2)}.$$
Hence there is a constant $C>0$ such that for any solution of \eqref{LS} we have
\begin{equation}
\label{Strichartz2}
\|u\|_{L^2(\RR,L^{\frac{2n}{n-2},2})}\leq C\left(\|u_0\|_{L^2}+\|\brx f\|_{L^2(\RR,L^2)}\right).
\end{equation}
In particular, we have obtained endpoint Strichartz estimate for the homogeneous equation
\begin{equation}\label{hom}
\|e^{itP_V}u_0\|_{L^2(\RR,L^{\frac{2n}{n-2},2})}\leq C\|u_0\|_{L^2},
\end{equation}
and for the inhomogeneous equation with zero initial data the weighted estimate
\begin{equation}
\label{inhom}
\|A(f)\|_{L^2(\RR,L^{\frac{2n}{n-2},2})}\leq C\|\brx f\|_{L^2(\RR,L^2)},
\end{equation}
where we denote
$$A(f)(t,x)=i\int_0^te^{i(t-\tau)P_V}f(\tau,x)\,d\tau.$$
We are left with proving the endpoint Strichartz estimate for $A(f)$. We shall do it by duality. 
Let $g\in \mathcal{C}^\infty_0(\mathbb{R}\times\mathbb{R}^n)$. We choose $T>0$ such that $\supp g\subset (-T,T)\times \mathbb{R}^n$. It follows that for positive $t$,
$$A^*(g)(t,x)=i\int_t^T e^{i(t-\tau)P_V}g(\tau,x)\,d\tau,$$
and so $A^*(g)$ is a solution of the backwards inhomogeneous equation, with source term $g$ and zero initial data at time $T$.
Then, from \eqref{Strichartz2}, 
$$\|A^*(g)\|_{L^2((0,T),L^{\frac{2n}{n-2},2})}\leq C \|\brx g\|_{L^2(\RR,L^2)},$$
where $C$ is independent of $T$, and similarly for negative time, so that
$$\|A^*(g)\|_{L^2(\mathbb{R},L^{\frac{2n}{n-2},2})}\leq C \|\brx g\|_{L^2(\RR,L^2)}.$$
The constant $C$ is independent of $g$, so it follows by duality that $A(f)$ is in the dual of $L^2(\mathbb{R},L^2(\brx dx))$, with the norm estimate
\begin{equation}
\label{Strichartz3}
\|\brx^{-1}A(f)\|_{L^2(\RR,L^2)}\leq C\|f\|_{L^2(\RR,L^{\frac{2n}{n+2},2})}.
\end{equation}
By \eqref{ONeil},
$$\|VA(f)\|_{L^2(\RR,L^{\frac{2n}{n+2},2})}\leq \|\brx V\|_{L^{n,\infty}}\|\brx^{-1} A(f)\|_{L^2(\RR,L^{2})}\leq C\|f\|_{L^2(\RR,L^{\frac{2n}{n+2},2})}.$$
Therefore \eqref{Strichartz1} gives us the endpoint Strichartz estimates for the zero-initial data inhomogeneous problem,
\begin{equation}
\label{u_2}
\|A(f)\|_{L^2(\RR,L^{\frac{2n}{n-2},2})}\leq C\|f\|_{L^2(\RR,L^{\frac{2n}{n+2},2})}.
\end{equation}
Summing with \eqref{hom} we obtain
\begin{equation}
\label{refined.endpoint}
\|u\|_{L^2(\RR,L^{\frac{2n}{n-2},2})} \leq C\left(\|u_0\|_{L^2}+\|f\|_{L^2(\RR,L^{\frac{2n}{n+2},2})}\right).
\end{equation}
In conclusion, the endpoint Strichartz estimate \eqref{Strichartz0} holds. 

Now, writing $\frac{d}{dt}\int |u|^2=\im \int f \overline{u}$, we get by H\"older inequality that for any admissible couple $(p,q)$
$$ \|u(t)\|_{L^2}^2-\|u_0\|^2_{L^2}\leq \frac{1}{2}\|f\|^2_{L^{p'}(\RR,L^{p'})}+\frac{1}{2}\|u\|_{L^{p}(\RR,L^{q})}^2.$$
Taking $p=\infty$, $q=2$ and $p=2$, $q=\frac{2n}{n-2}$ yields (using also \eqref{refined.endpoint} for the second line)
\begin{align}
\label{Strichartza}
\|u\|_{L^{\infty}(\RR,L^2)}&\leq C\Big(\|u_0\|_{L^2}+\|f\|_{L^1(\RR,L^2)}\Big)\\
\label{Strichartzb}
\|u\|_{L^{\infty}(\RR,L^2)}&\leq C\Big(\|u_0\|_{L^2}+\|f\|_{L^2(\RR,L^\frac{2n}{n+2})}\Big).\\
\intertext{By the same duality argument than above, we can deduce from \eqref{Strichartzb}}
\label{Strichartzc}
\|u\|_{L^{2}(\RR,L^{\frac{2n}{n-2}})}&\leq C\Big(\|u_0\|_{L^2}+\|f\|_{L^1(\RR,L^2)}\Big).
\end{align}

Estimates for other values of $p_1$, $p_2$, $q_1$ and $q_2$ follow from interpolation between \eqref{Strichartz0}, \eqref{Strichartza}, \eqref{Strichartzb} and \eqref{Strichartzc}.

Note that we only need \eqref{smoothing} with $L^2$ instead of $H^{1/2}$ to show the Strichartz estimates. However, we stated the smoothing property \eqref{smoothing} for its own interest.

\bibliographystyle{acm} %plain, alpha, acm, apalike, unsrt?
\bibliography{StrichartzManifolds}

\bigskip

\end{document}